\theoremstyle{plain}
\newtheorem{theorem}{Theorem}[section]
\newtheorem{lemma}[theorem]{Lemma}
\newtheorem{prop}[theorem]{Proposition}
\newtheorem{utheorem}{\textrm{\textbf{Theorem}}}
\theoremstyle{definition}
\newtheorem{defn}[theorem]{Definition}
\newtheorem{rem}[theorem]{Remark}
\numberwithin{equation}{section}
\begin{document}
	\title[ Sign regularity preserving linear operators
	]{Sign regularity preserving linear operators}
	
	\author{Projesh Nath Choudhury and Shivangi Yadav}
	\address[P.N.~Choudhury]{Department of Mathematics, Indian Institute of Technology Gandhinagar, Gujarat 382355, India}
	\email{\tt projeshnc@iitgn.ac.in}
	\address[S.~Yadav]{Department of Mathematics, Indian Institute of Technology Gandhinagar, Gujarat 382355, India; Ph.D. Student}
	\email{\tt shivangi.yadav@iitgn.ac.in, shivangi97.y@gmail.com}
	
	\date{\today}
	
	\begin{abstract}
		A matrix $A\in \mathbb{R}^{m \times n}$ is strictly sign regular/SSR (or sign regular/SR) if for each $1 \leq k \leq \min\{m,n\}$, all (non-zero) $k\times k$ minors of $A$ have the same sign. This class of matrices contains the totally positive matrices, and was first studied by Schoenberg in 1930 to characterize variation diminution, a fundamental property in total positivity theory. In this article, we classify all surjective linear mappings $\mathcal{L}:\mathbb{R}^{m\times n}\to\mathbb{R}^{m\times n}$ that preserve: (i)~sign regularity and (ii)~sign regularity with a given sign pattern, as well as (iii)~strict versions of these.
	\end{abstract}
	
	\subjclass[2020]{15A86, 47B49 (primary); 15B48 (secondary)}
	
\keywords{Total positivity, strict sign regularity, sign regularity, linear preserver problem}
	
	\maketitle
	
	\vspace*{-9mm}
	\settocdepth{section}
	\tableofcontents

\section{Introduction and main results} \label{section introduction}

This work studies preserver problems for totally positive matrices, in
fact for the broader class of (strictly) sign regular matrices. Preserver problems have been the focus of considerable work in analysis and operator theory, over the past century and even earlier.

We will focus on \textit{linear preserver problems}, which aim to describe the general form of linear transformations on a space of bounded linear operators that leave certain functions, subsets, relations, etc.\ invariant. The primary notion of a preserver that we study in this paper is as follows: given a subset $S$ of the space $V$ consisting of bounded linear operators on a Banach space, a linear operator $\mathcal{L}:V\to V$ such that $\mathcal{L}(S)=S$ is called a \textit{linear preserver of $S$} or \textit{$S$-preserver}.
In 1897, Frobenius~\cite{Frobenius1897} characterized the general form of all determinant preserving linear maps on matrix algebras, which is regarded as the first result on linear preserver problems. These problems have since been widely studied in matrix theory and operator theory. Some examples include the problem of spectrum preserving transformations on a space of bounded linear operators over a Banach space (Jafarian--Sourour~\cite{Jaf-Sour86}), linear preservers of the unitary group in $\mathbb{C}^{n\times n}$ (Marcus~\cite{Marcus59}) and on arbitrary $C^*$-algebras (Russo--Dye~\cite{Russo-Dye66}), 
linear transformations on operator algebras preserving absolute values
(Radjabalipour--Seddighi--Taghavi~\cite{Radja-Sedd-Tag01}), and linear
maps preserving invertibility (Sourour~\cite{Sour96}). For more details about linear preserver problems and some techniques to tackle them, we refer to~\cite{GLS2000,Li-Pierce01,Molnar07}. The classification of linear preservers of various forms of positivity has long been studied and is an important problem in the preserver literature -- for instance, the linear preservers of copositive matrices and completely positive rank preserving linear maps  have been recently characterized by Shitov~\cite{Shi21,Shi23}. However,  the linear preservers of positive semidefinite matrices have been actively studied but are  still not completely classified.

The focus of this work is on exploring linear preservers of a class of matrices that includes as a special case the widely studied totally
positive matrices. Given integers $m,n\geq k\geq1$, a matrix $A \in \mathbb{R}^{m\times n}$ is said to be \textit{strictly sign regular of order $k$} (SSR$_k$) if for all $1 \leq r \leq k$, there exists a sequence of signs $\epsilon_r\in\{1,-1\}$ such that every $r\times r$ minor of $A$ has sign $\epsilon_r$. An SSR$_k$ matrix $A$ is \textit{strictly sign regular} (SSR) if $k=\mathrm{min}\{m,n\}$. If minors are allowed to also vanish, then $A$ is correspondingly said to be \textit{sign regular of order $k$} (SR$_k$) and \textit{sign regular} (SR). For an SSR (respectively SR) matrix $A$, if $\epsilon_r=1$ for all $r\geq1$ then $A$ is \textit{totally positive} (TP) (respectively \textit{totally non-negative} (TN)). These matrices arise in diverse sub-fields of mathematics, including analysis, approximation theory, cluster algebras, combinatorics, differential equations, Gabor analysis, integrable systems, interpolation theory and splines, matrix theory, probability and statistics, Lie theory, and representation theory \cite{Ando87,BGKP20,Bre95,fallat-john,FZ02,gantmacher-krein,GRS18,Karlin64,K68,Karlinsplines,KW14,Lu94,pinkus,Ri03,S55,Whitney}.

A fundamental property widely linked with SSR and SR matrices (including TP/TN matrices) is variation diminution (VD) which says that if a matrix $A$ acts on a vector ${\bf x}$ then the number of sign changes in the coordinates of $A{\bf x}$ is at most this
number for ${\bf x}$~\cite{BJM81,C22,S30}. This was first studied by Fekete in correspondence with P\'{o}lya~\cite{FP12} using P\'{o}lya frequency sequences. In 1930, Schoenberg showed that the VD property holds for SR matrices, followed by Motzkin~\cite{Mot36} (1936) who characterized all $m \times n$ matrices of rank $n$ with the VD property, by showing that they are precisely the sign regular matrices. These progresses were taken forward by Gantmacher--Krein~\cite{GK50} who characterized $m \times n$ strictly sign regular matrices with the size restriction $m>n$ using the variation diminishing property. In prior recent work~\cite{CY-VDP23}, we concluded this line of investigation by (i)~characterizing all $m \times n$ SSR matrices using variation diminution, (ii)~characterizing strict sign regularity of a given sign pattern in terms of VD, and (iii)~strengthening Motzkin's characterization of sign regular matrices by omitting the rank constraint and specifying the sign pattern.

We now come to the question of classifying the linear preservers of SSR and SR matrices. For the restricted  subclasses of TP and TN matrices, which are moreover square, their  linear preservers had been classified by Berman--Hershkowitz--Johnson~\cite{BHJ85}. In this paper, we extend their work in several ways. First, the results below hold for arbitrary sizes. Secondly, we classify the linear preservers of SSR and SR matrices (allowing all sign patterns). Third, we also do this for every fixed sign pattern. We need the following definitions and notations to state our main results.

\begin{defn} Throughout this manuscript, let $m,n\geq1$ be integers.
	\begin{enumerate}
		\item An \textit{exchange matrix} is a square antidiagonal $0$-$1$ matrix of the form
		\[P_n:=\begin{pmatrix}
			0 & \cdots & 1 \\
			\vdots & \iddots & \vdots \\
			1 & \cdots & 0 \\
		\end{pmatrix} \in \{ 0, 1 \}^{n\times n}.\]
		
		\item For an SSR (SR) matrix $A \in \mathbb{R}^{m\times n}$, its \textit{sign pattern} is the
		ordered tuple $\epsilon = (\epsilon_1, \dots, \epsilon_{\min\{m,n\}})$, with $\epsilon_r$ denoting the sign of all (non-zero) $r \times r$ minors of $A$. 
		
		\item Given integers $m,n\geq k\geq 1$ and a sign pattern $\epsilon=(\epsilon_1,\ldots,\epsilon_k)$, an SSR$_k$ (SR$_k$) matrix $A\in \mathbb{R}^{m\times n}$ is called SSR$_k({\epsilon})$ (SR$_k({\epsilon})$) if the sign pattern of the (non-zero) minors of $A$ of order at most $k$ is given by $\epsilon$. If $k=\min\{m,n\}$ then we simply write SSR$(\epsilon)$ (SR$(\epsilon)$).
		
		\item Let $\mathcal{SR}$ denote the class of SR matrices of a given fixed size. Similarly we define $\mathcal{SR}_2(\epsilon)$, $\mathcal{SR}(\epsilon)$, $\mathcal{SSR}$, and $\mathcal{SSR}(\epsilon)$, where $\mathcal{SR}_2(\epsilon)$ is only concerned with  the signs $\epsilon_1$ and $\epsilon_2$.
		
		\item Let $P(S)$ denote the set of $S$-preservers, for $S$ among $\mathcal{SR},\ \mathcal{SR}_2,\ \mathcal{SR}(\epsilon),\ \mathcal{SR}_2(\epsilon)$.
	\end{enumerate} 
\end{defn}

We now state our main results. Our first theorem characterizes all linear preservers for the class of SR matrices. Moreover, we show that surprisingly, to classify the linear preservers of $\mathcal{SR}$ it suffices to examine the linear preservers of $\mathcal{SR}_2$.  

\begin{utheorem} \label{D}
	Let $\mathcal{L}:\mathbb{R}^{m\times n}\to\mathbb{R}^{m\times n}$  be a linear transformation, where $m,n \geq 2$ are integers such that $\max\{m,n\}\geq3$. Then the following statements are equivalent.
	\begin{itemize}
		\item[(1)] $\mathcal{L}$ maps the class of $m\times n$ SR matrices onto itself.
		\item[(2)] $\mathcal{L}$ maps the class of $m\times n$ SR$_2$ matrices onto itself.
		\item[(3)] $\mathcal{L}$ is a composition of one or more of the following types of transformations:
		\begin{itemize}
			\item[(a)] $A\mapsto FAE$, where $F_{m\times m}, E_{n\times n}$ are diagonal matrices with positive diagonal entries;
			\item[(b)] $A\mapsto -A$;
			\item[(c)] $A\mapsto P_mA$, in which $P_m$ is an exchange matrix;
			\item[(d)] $A\mapsto AP_n$;  and
			\item[(e)] $A\mapsto A^T$, provided $m=n$.
		\end{itemize}
	\end{itemize}
	Moreover, the theorem is also true if SR is replaced by SSR in parts (1) and (2).
\end{utheorem}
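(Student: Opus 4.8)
The plan is to establish the cycle $(3)\Rightarrow(1)\Rightarrow(2)\Rightarrow(3)$ together with the easy implication $(3)\Rightarrow(2)$, so that the \emph{surprising} reduction to $\mathcal{SR}_2$ is exactly $(1)\Rightarrow(2)$ while the substantive classification is $(2)\Rightarrow(3)$. For $(3)\Rightarrow(1)$ and $(3)\Rightarrow(2)$ I would simply record the effect of each generator on an $r\times r$ minor: a map of type (a) multiplies every $r\times r$ minor by a positive factor; type (b) multiplies it by $(-1)^r$; types (c) and (d) reverse the chosen rows (resp.\ columns) and hence multiply it by the fixed sign $(-1)^{r(r-1)/2}$; and type (e) leaves every minor unchanged. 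In each case the whole family of $r\times r$ minors is rescaled by a single nonzero constant, so sign regularity of \emph{every} order is preserved, and in particular both $\mathcal{SR}$ and $\mathcal{SR}_2$ are mapped into themselves; since each generator is invertible with an inverse of the same type, applying the inclusion to $\mathcal{L}^{-1}$ upgrades this to onto. This places the maps in (3) inside $P(\mathcal{SR})\cap P(\mathcal{SR}_2)$.

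The heart of the argument is $(2)\Rightarrow(3)$, where $\mathcal{L}$ is a linear bijection (surjectivity on a finite-dimensional space gives invertibility) with $\mathcal{L}(\mathcal{SR}_2)=\mathcal{SR}_2$. The decisive first step is to show that $\mathcal{L}$ preserves the rank-one variety. The contact with the hypothesis is that a nonzero $xy^{T}$ lies in $\mathcal{SR}_2$ precisely when $x$ and $y$ are each sign-definite, and that such matrices are exactly the nonzero elements of $\mathcal{SR}_2$ all of whose $2\times 2$ minors vanish; I would characterise this subclass intrinsically inside $\mathcal{SR}_2$ and, using that $\mathcal{L}^{-1}$ is again an $\mathcal{SR}_2$-preserver, promote the statement ``sends sign-definite rank-one matrices to rank-one matrices'' to full preservation of the rank-one variety. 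Once this is in hand, the classical rank-one preserver theorem forces $\mathcal{L}(A)=RAS$ for fixed invertible $R\in\mathbb{R}^{m\times m}$, $S\in\mathbb{R}^{n\times n}$, or, only when $m=n$, $\mathcal{L}(A)=RA^{T}S$. Since the single-entry matrices $E_{ij}$ are the rank-one matrices of smallest support and $\mathcal{L}$ preserves rank one in both directions, each $E_{ij}$ is sent to a scalar multiple of some $E_{kl}$, which forces $R$ and $S$ to be monomial. Finally I would impose the two sign constraints: the order-one pattern $\epsilon_1$ forces all nonzero entries of $R,S$ to carry a common sign, i.e.\ positive generalized permutations up to one global factor $-1$ (type (b)); and the order-two pattern $\epsilon_2$, evaluated on the two-entry matrices $E_{ij}+E_{kl}$, forces the underlying row and column permutations to be consistently either the identity or the full reversal, which is exactly a positive diagonal factor (type (a)) composed with the exchange matrices of types (c), (d) (and the transpose (e) when $m=n$). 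This yields the form (3).

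The implication $(1)\Rightarrow(2)$ is where the reduction becomes visible. Since a rank-one matrix has all minors of order $\ge 2$ equal to zero, for such a matrix sign regularity and sign regularity of order $2$ say the same thing; hence the rank-one matrices in $\mathcal{SR}$ coincide with those in $\mathcal{SR}_2$, and the rank-one preservation step above uses only information that an $\mathcal{SR}$-preserver already supplies. Running the same derivation with $\mathcal{SR}$ in place of $\mathcal{SR}_2$ therefore forces $\mathcal{L}$ into the generator form (3), which by the easy direction also preserves $\mathcal{SR}_2$; this gives $(1)\Rightarrow(2)$ and closes the cycle. The ``moreover'' clause for $\mathcal{SSR}$ is obtained by the same scheme, replacing vanishing minors with strict inequalities throughout and using the strict (open) analogues of the rank-one building blocks.

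I expect the main obstacle to be the rank-one preservation step: isolating the sign-definite rank-one matrices by an intrinsic property stable under $\mathcal{L}$, and then upgrading this to preservation of the entire rank-one variety (on which $\mathcal{SR}_2$ exerts no direct control) is the delicate point, after which the rank-one preserver theorem and the sign bookkeeping are routine. The dimension hypotheses ($m=n\ge 3$, or $m\ne n$ with $m,n\ge 2$) enter precisely here and in the final bookkeeping, since the transpose alternative is available only for $m=n$, while the degenerate cases $\min\{m,n\}=1$ and $m=n=2$ admit extra symmetries that must be excluded.
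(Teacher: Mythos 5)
Your overall architecture ($(3)\Rightarrow(1),(2)$ by tracking minors, $(2)\Rightarrow(3)$ as the substantive step, $(1)\Rightarrow(2)$ by rerunning the argument with rank-$\le 2$ test matrices) is sound, and your route through rank-one preservation plus the classical rank-one preserver theorem is genuinely different from the paper's. But the proposal has a real gap exactly where you flag the ``delicate point'', and flagging it is not closing it. Nothing in $\mathcal{SR}_2$-membership alone tells you that $\mathcal{L}$ sends sign-definite rank-one matrices to rank-one matrices: the class $\mathcal{SR}_2$ controls only a thin cone inside the rank-one variety, the description ``nonzero element of $\mathcal{SR}_2$ with all $2\times2$ minors vanishing'' is not visibly $\mathcal{L}$-stable, and you never produce the promised intrinsic characterization. (One could try to manufacture it -- e.g.\ via the four open classes $\mathcal{SSR}_2(\epsilon)$ as the interior components of $\mathcal{SR}_2$, their closures via Gantmacher--Krein density, and rank-$\le 1$ matrices as the nonzero pairwise intersections of closures with common $\epsilon_1$ -- but that requires proving $\mathrm{int}(\mathcal{SR}_2)=\mathcal{SSR}_2$ and connectedness of each class, none of which you supply.) The paper avoids this entirely: it first proves the matrix of $\mathcal{L}$ is monomial (Propositions \ref{SR_square_image_same_epsilon_1_sign}--\ref{SR_square_monomial}), then pins the corners, then shows rows go to rows and columns to columns via $J(c)$-perturbations, and only at the very end, after $S_{ij}=\{E_{ij}\}$, deduces that $\mathcal{L}(J)$ has rank one. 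In other words, rank-one preservation is the \emph{output} of the paper's structural analysis, not an input one gets for free.

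There is also a concrete error in your final sign bookkeeping: the test matrices $E_{ij}+E_{kl}$ impose no constraint at all. A nonnegative matrix with at most two nonzero entries has at most one nonzero $2\times2$ minor, hence lies in $\mathcal{SR}_2$ regardless of that minor's sign; so $R(E_{ij}+E_{kl})S$ is in $\mathcal{SR}_2$ for \emph{arbitrary} monomial $R,S$, and these matrices cannot force the underlying permutations to be the identity or the full reversal. To rule out scrambled permutations you need test matrices with at least three active entries in generic position -- this is precisely what the paper's $J(c)$ arguments (e.g.\ in Propositions \ref{SR_square_S11SnnS1nSn1_elements} and \ref{SR_square_firstLast_rowTOrow_colTOcol}) accomplish by exhibiting two $2\times2$ minors of opposite signs. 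Finally, the ``moreover'' clause cannot be handled by ``replacing vanishing minors with strict inequalities'': your rank-one building blocks (and the rank-$\le 2$ matrices $J(c)$) are never SSR, since their higher-order minors vanish, so the strict rerun has no test matrices to work with. The paper instead gets $P(\mathcal{SSR})=P(\mathcal{SR})$ cheaply from the Gantmacher--Krein density theorem together with the closure lemma (Lemma \ref{lemma_preserver}, as in Remark \ref{SSR_proof}), and you would need the same device.
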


Note that if $m=n$ then (3)(d) is not needed, given (c) and (e).

\begin{rem}
	If $m \neq n$ and $\min\{m,n\}=1$ in Theorem~\ref{D}, the problem reduces to classifying linear $\mathcal{SR}_1$-preservers. In this case Theorem~\ref{D} still holds, but now the second statement is $\mathcal{L} \in P (\mathcal{SR}_1)$ in place of  $\mathcal{L} \in P(\mathcal{SR}_2)$, and $P_m, P_n$ can be any permutation matrices instead of exchange matrices in the third statement. If $m=n=2$, there is only one $2\times 2$ minor and since we are considering  the set $S$ of all sign patterns of SR matrices in Theorem~\ref{D}, the problem again reduces to classifying linear $\mathcal{SR}_1$-preservers. Our second main result addresses this latter case.
\end{rem}

\begin{utheorem}\label{thrmSR_1}
	Given a linear map $\mathcal{L}:\mathbb{R}^{2\times2}\to\mathbb{R}^{2\times 2}$, the following are equivalent.
	\begin{itemize}
		\item[(1)] $\mathcal{L}$ maps the class of $2\times 2$ SR matrices onto itself.
		\item[(2)] $\mathcal{L}$ maps the class of $2\times 2$ SR$_1$ matrices onto itself .
		\item[(3)] $\mathcal{L}$ is a composition of one or more of the following types of transformations:
		\begin{itemize}
			\item[(a)] $A\mapsto H\circ A$, where $H$ is an entrywise positive matrix and $\circ$ denotes the Hadamard product;
			\item[(b)] $A\mapsto -A$;
			\item[(c)] $A\mapsto P_2A$, in which $P_2$ is an exchange matrix;
			\item[(d)] $A\mapsto A^T$; and
			\item[(e)] $\begin{pmatrix}
				a_{11} & a_{12} \\
				a_{21} & a_{22}
			\end{pmatrix}\mapsto\begin{pmatrix}
				a_{11} & a_{12} \\
				a_{22} & a_{21}
			\end{pmatrix}$.
		\end{itemize}
	\end{itemize}
	Moreover, the theorem is also true if SR$_1$ is replaced by SSR$_1$ in part (2).
\end{utheorem}

In this case, we additionally obtain the map specified in (e), whose higher-dimensional analogue does not appear to exist. Moreover, this map does not preserve the invertibility of a $2\times2$ matrix. Therefore, we classify the linear preservers for the class of $2\times2$ SSR matrices in Theorem~\ref{Theorem_SSR2}.
	
Next, observe that the two theorems above do not assure that the sign patterns of an SR or SSR matrix $A$ and its image $\mathcal{L}(A)$ will be identical. In our final main result, our objective is to characterize linear preservers for the class of SR($\epsilon$) matrices for any size $m \times n$ and any given sign pattern $\epsilon$. Further, we show that it is again sufficient to study the linear preservers of $\mathcal{SR}_2(\epsilon)$  in order to characterize linear $\mathcal{SR}(\epsilon)$-preservers.

\begin{utheorem}\label{F}
	Let $\epsilon$ be a given sign pattern and $\mathcal{L}:\mathbb{R}^{m\times n}\to\mathbb{R}^{m\times n}$ be a linear transformation, where  $m,n\geq2$. Then the following statements are equivalent.
	\begin{itemize}
		\item[(1)] $\mathcal{L}$ maps the class of $m\times n$ SR$(\epsilon)$ matrices onto itself.
		\item[(2)] $\mathcal{L}$ maps the class of $m\times n$ SR$_2(\epsilon)$ matrices onto itself.
		\item[(3)] $\mathcal{L}$ is a composition of one or more of the following types of transformations:
		\begin{itemize}
			\item[(a)] $A\mapsto FAE$, where $F_{m\times m}, E_{n\times n}$ are diagonal matrices with positive diagonal entries;
			\item[(b)] $A\mapsto P_mAP_n$, where $P_m, P_n$ are exchange matrices; and
			\item[(c)] $A\mapsto A^T$, provided $m=n$.
		\end{itemize}
	\end{itemize}
	Moreover, the theorem is also true if SR$(\epsilon)$ is replaced by SSR$(\epsilon)$.
\end{utheorem}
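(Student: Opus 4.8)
The plan is to establish the equivalence by proving the two easy inclusions (3)$\Rightarrow$(1) and (3)$\Rightarrow$(2) directly, and then the two converses (2)$\Rightarrow$(3) and (1)$\Rightarrow$(3) by a single combinatorial argument; the equivalence (1)$\Leftrightarrow$(2) then drops out, and this common argument is exactly what explains why order two suffices. For (3)$\Rightarrow$(1) and (3)$\Rightarrow$(2) I would track the effect of each generator on an $r\times r$ minor: a map $A\mapsto FAE$ with $F,E$ positive diagonal multiplies every $r\times r$ minor by a positive scalar; the paired exchange $A\mapsto P_mAP_n$ reverses both the row and the column index sets and hence multiplies each $r\times r$ minor by $(-1)^{2\binom r2}=1$ while permuting the minors among themselves; and $A\mapsto A^T$ leaves the collection of minors unchanged. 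Thus each generator fixes the sign pattern $\epsilon$ exactly, so any composition preserves both \(\mathrm{SR}(\epsilon)\) and \(\mathrm{SR}_2(\epsilon)\). It is worth emphasising why the generator list shrinks relative to Theorem~\ref{D}: negation sends \(\mathrm{SR}(\epsilon)\) to \(\mathrm{SR}(\epsilon')\) with $\epsilon'_r=(-1)^r\epsilon_r$, and a single exchange $A\mapsto P_mA$ flips each $r\times r$ minor by $(-1)^{\binom r2}$; neither fixes $\epsilon$, which is precisely why they are replaced by the paired exchange and dropped, respectively.

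Before the converse I would record two reductions. First, since \(\mathrm{SR}(\epsilon)=-\,\mathrm{SR}(\epsilon')\) and $\mathcal L$ is linear, $\mathcal L$ preserves \(\mathrm{SR}(\epsilon)\) iff it preserves \(\mathrm{SR}(\epsilon')\); as the target form (3) is the same for $\epsilon$ and $\epsilon'$, I may assume $\epsilon_1=1$, i.e.\ all entries are nonnegative. Second, since $\mathcal L$ is a linear bijection of $\mathbb R^{m\times n}$ carrying the class onto itself, so is $\mathcal L^{-1}$; hence for $A,B$ in the class, $A+B$ lies in the class iff $\mathcal L(A)+\mathcal L(B)$ does. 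I call this the compatibility relation, and $\mathcal L$ preserves it in both directions. The entire converse will be run using only this relation together with the single-entry matrices $E_{ij}$ and their pairwise sums $E_{ij}+E_{kl}$. The observation that powers (1)$\Leftrightarrow$(2) is that these test matrices have at most two nonzero entries, so all their minors of order $\ge 3$ vanish identically; consequently $E_{ij}$ and $E_{ij}+E_{kl}$ belong to \(\mathrm{SR}(\epsilon)\) exactly when they belong to \(\mathrm{SR}_2(\epsilon)\). A direct computation shows $E_{ij}$ is in the class for all $i,j$, and (for $\epsilon_1=\epsilon_2=1$) that $E_{ij}+E_{kl}$ is admissible precisely when the pair is \emph{concordant}, $(i-k)(j-l)\ge 0$; the case $\epsilon_2=-1$ replaces concordant by discordant. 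Because the admissible configurations coincide in \(\mathrm{SR}(\epsilon)\) and \(\mathrm{SR}_2(\epsilon)\), the same argument proves both (2)$\Rightarrow$(3) and (1)$\Rightarrow$(3).

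From here the classification proceeds in three moves. The first, and the step I expect to be the main obstacle, is to show $\mathcal L$ carries each single-entry matrix to a positive multiple of a single-entry matrix, equivalently that $\mathcal L$ preserves the entrywise-nonnegative cone \(\mathrm{SR}_1(1)\). The usual extreme-ray argument is unavailable because \(\mathrm{SR}_2(\epsilon)\) is not convex (for instance $E_{12}+E_{21}$ has a negative $2\times2$ minor), so one must instead exploit the compatibility relation: the corner atoms $E_{11},E_{mn}$ are compatible with every atom while $E_{1n},E_{m1}$ are compatible with very few, and $\mathcal L$ must preserve these compatibility degrees; pushing this through should force $\mathcal L(E_{ij})=c_{ij}E_{\pi(i,j)}$ for positive scalars $c_{ij}$ and a bijection $\pi$ of index pairs. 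The second move identifies $\pi$: compatibility is exactly adjacency in the concordance graph on index pairs (same row, same column, or concordant), so $\pi$ must be a graph automorphism, and classifying these shows $\pi$ is a simultaneously monotone, a simultaneously antimonotone, or (when $m=n$) a coordinate-swapping relabelling of rows and columns. The third move pins the scalars: writing a generic $2\times2$ minor of $\mathcal L(A)$ and demanding it keep the sign of the corresponding minor of $A$ for all nonnegative $A$ forces $c_{ij}c_{kl}=c_{il}c_{kj}$, hence $c_{ij}=f_ie_j$ with $f_i,e_j>0$. Reassembling, the monotone case yields $A\mapsto FAE$ with $F=\diag(f_i),\,E=\diag(e_j)$, the antimonotone case yields $P_mAP_n$ followed by such a scaling, and the swap yields $A^T$; this is exactly form (3), and the sign $\epsilon_2$ is what rules out a single reversal and rules out negation.

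Finally, for the \(\mathrm{SSR}(\epsilon)\) statement the single-entry test matrices are no longer admissible, so I would argue by continuity: a linear bijection $\mathcal L$ preserving \(\mathrm{SSR}(\epsilon)\) (together with its inverse) is a homeomorphism of $\mathbb R^{m\times n}$, hence preserves the topological closure of \(\mathrm{SSR}(\epsilon)\). Since one checks that this closure contains every single-entry matrix (each $E_{ij}$ being a limit of strictly sign-regular matrices of pattern $\epsilon$), the preceding analysis applies verbatim to the closure and again returns form (3), which indeed preserves \(\mathrm{SSR}(\epsilon)\) by the minor computation of the first paragraph.
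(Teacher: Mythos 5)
Your preliminary reductions are fine: the generator computations (paired exchange multiplying each $r\times r$ minor by $(-1)^{2\binom{r}{2}}=1$, etc.), the normalization $\epsilon_1=1$, the characterization of when $E_{ij}+E_{kl}\in\mathcal{SR}_2(\epsilon)$, the observation that the test matrices have vanishing minors of order $\geq 3$ (so (1) and (2) can be treated at once), and the closure argument for the SSR$(\epsilon)$ case (this is exactly the paper's use of Theorem \ref{G-K} and Lemma \ref{lemma_preserver}). But your converse has a genuine gap, in two places. First, your route to $\mathcal{L}(E_{ij})=c_{ij}E_{\pi(i,j)}$ via ``compatibility degrees'' proves nothing: the compatibility relation among the images $\mathcal{L}(E_{ij})$ is tautologically isomorphic to the relation among the $E_{ij}$ for \emph{any} linear preserver, whatever the images look like, so it gives no purchase on their supports and cannot force them to be scaled atoms. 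Ironically, the argument you dismiss as unavailable is the one that works: since $\epsilon_1=1$, every matrix in $\mathcal{SR}_2(\epsilon)$ is entrywise nonnegative, so the matrix $L$ of $\mathcal{L}$ in the basis $(E_{ij})$ satisfies $L\geq 0$, and $L^{-1}\geq 0$ because $\mathcal{L}^{-1}$ is also a preserver; an invertible nonnegative matrix with nonnegative inverse is monomial. The relevant cone here is the full nonnegative orthant, not the non-convex set $\mathcal{SR}_2(\epsilon)$; this is the paper's Proposition \ref{SR_epsilon_monomial}.

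Second, and more seriously, your second move is false: automorphisms of the concordance graph need not be geometric. For $\epsilon_2=1$, compatibility of atoms is exactly comparability in the product order on $[m]\times[n]$, and the atoms $E_{11}$ and $E_{mn}$ are \emph{universal} vertices, compatible with every atom. Hence the transposition swapping $(1,1)\leftrightarrow(m,n)$ while fixing every other index pair is a graph automorphism for all $m,n\geq 2$, yet it is induced by no map $(i,j)\mapsto(\sigma(i),\tau(j))$, no reversal, and no transpose (it would force $\sigma(1)=m$ while fixing $(1,2)$, which is impossible). An analogous failure occurs for $\epsilon_2=-1$ with $E_{1n},E_{m1}$ universal. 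So pairwise sums of atoms simply do not carry enough information to pin down $\pi$, and your third move silently presupposes $\pi$ geometric when it speaks of the ``corresponding'' minor of $A$. The paper closes precisely this hole with richer, non-atomic test matrices: perturbed all-ones matrices $J(c)$, whose images must have every nonzero $2\times 2$ minor of sign $\epsilon_2$; this first locates the images of the four corners (Proposition \ref{SR_epsilon_S11Snn_S1nSn1_combo}, plus a separate argument ruling out $S_{1n}=\{E_{m1}\}$, $S_{m1}=\{E_{1n}\}$ when $m\neq n$) and then forces $\mathcal{L}$ to map whole rows to rows and columns to columns (Proposition \ref{SR_epsilon_FirstLast_Row_Col}). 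The scalar identities $c_{ij}c_{kl}=c_{il}c_{kj}$ are then obtained, much as you anticipate, but by applying \emph{both} $\mathcal{L}$ and $\mathcal{L}^{-1}$ to $J$ inside an induction on the matrix size. To repair your plan you would need to replace the compatibility graph by some such family of tests; as written, the central combinatorial step does not go through.
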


In particular, by taking $m=n$ and  $\epsilon_k=1$ for all $k$, Theorem~\ref{F} gives the linear preservers for the classes of $n\times n$ TP and TN matrices as a special case which were characterized by
Berman--Hershkowitz--Johnson~\cite{BHJ85}. Thus, the three results above -- along with Theorem \ref{Theorem_SSR2} -- conclude (in a sense) the linear preserver problem for a well studied class of positive matrices.

\begin{rem}
The proofs of Theorems~\ref{D}~and~\ref{F} will reveal that the linear preservers of $\mathcal{SR}$ and $\mathcal{SR}_2$ (respectively $\mathcal{SR}(\epsilon)$ and $\mathcal{SR}_2(\epsilon)$) coincide with the linear preservers of
$\mathcal{SR}_k$ (respectively $\mathcal{SR}_k(\epsilon)$) for $2\leq k\leq\mathrm{min}\{m,n\}$.
\end{rem}

\textbf{Organization of the paper:} The remaining sections of the paper are devoted to proving our main results. In Section~\ref{section SR} we recall some basic results and prove Theorems~\ref{D}~and~\ref{thrmSR_1}, which classify all linear maps that preserve SR/SSR matrices. In the final section, we prove Theorem~\ref{F}.

\section{Theorems~\ref{D}~and~\ref{thrmSR_1}: Linear preserver problem for sign regularity}\label{section SR}

The goal of this section is to prove Theorems~\ref{D}~and~\ref{thrmSR_1}: classify all linear sign regularity preservers. We begin by proving Theorem~\ref{D}. The implications (3)$\implies$(1) and (3)$\implies$(2) follow from straightforward calculations. Note that the map $A\mapsto A^T$ does not preserve the domain $\mathcal{SR}\cap\mathbb{R}^{m\times n}$ for $m\neq n$. So we need to treat the cases $m=n$ and $m\neq n$ separately. We will first show that (2)$\implies$(3), and finally why (1)$\implies$(3) after considering both cases $m=n$ and $m\neq n$. To proceed, we need the following basic notations and two preliminary results.
\begin{itemize}
	\item[(i)] Let $E_{ij}$ denote the matrix whose $(i,j)$ entry is one, and zero otherwise.
	
	\item[(ii)] We define the set $S_{ij}:=\{E_{pq}:\mathcal{L}(E_{ij})_{pq}\neq0\}$.
	
	\item[(iii)] Let $J=J_{m \times n}$ be the $m \times n$ matrix with all entries 1. We will denote $\mathcal{L}(J)$ by $\mathcal{L}(J):=Y=(y_{ij})$, and we will specify the order $m\times n$ as and when we use the matrix $J$.
	
	\item[(iv)] Let $\epsilon_i:=\epsilon_i(A)$ denote the sign of the $i\times i$ non-zero minors of $A\in\mathcal{SR}$.
	
	\item[(v)] We say that a real matrix $A = (a_{ij}) \geq 0$ if each $a_{ij} \geq 0$. Similarly for $A \leq 0$.
	
	\item[(vi)] A real square matrix $A$ is called a \textit{monomial matrix} if all rows and columns of $A$ contain exactly one non-zero entry, which is moreover positive. It is well-known that these are precisely the matrices $A$ such that $A, A^{-1} \geq 0$.
	
	\item[(vii)] Let $A\begin{pmatrix}
		i_1,\ldots,i_p \\
		j_1,\ldots,j_q
	\end{pmatrix}$ denote the $p\times q$ submatrix of an
	$m\times n$ matrix $A$ indexed by rows $1\leq i_1<\cdots<i_p\leq m$ and columns $1\leq j_1<\cdots<j_q\leq n$, where $p\in [1, m], q\in[1,n]$ are integers.
	
	\item[(viii)] We denote by
		\begin{equation}\label{basis}
		\mathfrak{B}=(E_{11},\ldots,E_{kk};E_{12},\ldots,E_{mn})
		\end{equation} a fixed ordered basis of each $\mathbb{R}^{m\times n}$, where $k=\min\{m,n\}$. The basis elements are arranged such that $E_{ii}$ appear first in increasing order of $i$, followed by the remaining elements arranged in lexicographic order.
\end{itemize}
The first preliminary result is a density theorem proved by Gantmacher--Krein in 1950 using the total positivity of the Gaussian kernel.

\begin{theorem}[Gantmacher--Krein \cite{GK50}] \label{G-K}
	Given integers $m,n\geq k\geq1$ and a sign pattern $\epsilon=(\epsilon_1,\ldots,\epsilon_k)$, the $m\times n$ SSR$_k(\epsilon)$ matrices are dense in the $m\times n$ SR$_k(\epsilon)$ matrices.
\end{theorem}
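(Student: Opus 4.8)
The plan is to smooth the given matrix by sandwiching it between strictly totally positive Gaussian kernels and to read off the signs of the resulting minors via the Cauchy--Binet formula. The classical input is the strict total positivity of the Gaussian (heat) kernel: for every $t>0$, the matrix $G_N(t)\in\mathbb{R}^{N\times N}$ with entries $G_N(t)_{ij}=\exp\bigl(-(i-j)^2/(4t)\bigr)$ is strictly totally positive (STP), i.e.\ all of its minors, of every order, are strictly positive. This follows by factoring
\[
\exp\!\bigl(-(i-j)^2/(4t)\bigr)=\exp\!\bigl(-i^2/(4t)\bigr)\cdot\exp\!\bigl(ij/(2t)\bigr)\cdot\exp\!\bigl(-j^2/(4t)\bigr),
\]
which exhibits $G_N(t)$ as a positive row-and-column scaling of the generalized Vandermonde matrix $\bigl[\exp(ij/(2t))\bigr]$; the latter is STP, as the kernel $(x,y)\mapsto e^{cxy}$ with $c>0$ is strictly totally positive, and a positive scaling of rows and columns multiplies each $r\times r$ minor by a positive factor. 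Moreover $G_N(t)_{ij}\to\delta_{ij}$ as $t\to0^{+}$, so $G_N(t)\to I_N$ entrywise.

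Given $A\in\mathbb{R}^{m\times n}$ that is SR$_k(\epsilon)$, I would set $A_t:=G_m(t)\,A\,G_n(t)$ for $t>0$ and argue that each $A_t$ is SSR$_k(\epsilon)$ while $A_t\to A$ as $t\to0^{+}$. The convergence is immediate from $G_m(t)\to I_m$, $G_n(t)\to I_n$ and continuity of multiplication. For the signs, fix $1\le r\le k$ and index sets $I\subseteq[m]$, $J\subseteq[n]$ with $|I|=|J|=r$, and write $A[I\mid J]$ for the submatrix on those rows and columns. Cauchy--Binet gives
\[
\det A_t[I\mid J]=\sum_{\substack{K\subseteq[m],\,L\subseteq[n]\\|K|=|L|=r}}\det G_m(t)[I\mid K]\cdot\det A[K\mid L]\cdot\det G_n(t)[L\mid J].
\]
Here every coefficient $\det G_m(t)[I\mid K]\,\det G_n(t)[L\mid J]$ is strictly positive by STP, while $A\in$ SR$_k(\epsilon)$ forces each $\det A[K\mid L]$ to be $0$ or of sign $\epsilon_r$. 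Hence every nonzero summand has sign $\epsilon_r$, and as soon as $A$ has at least one nonzero $r\times r$ minor the whole sum is \emph{strictly} of sign $\epsilon_r$. Since this holds for all $I,J$ and all $r\le k$, the matrix $A_t$ is SSR$_k(\epsilon)$, yielding the desired approximation.

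The step I expect to be the main obstacle is the degenerate case. The argument above produces strictly signed minors of order $r$ only when $A$ possesses a nonzero $r\times r$ minor, i.e.\ only when $\operatorname{rank}A\ge k$; and since $G_m(t),G_n(t)$ are invertible, $A_t$ has the same rank as $A$, so the sandwiching \emph{cannot} create the higher-order minors that are missing when $\operatorname{rank}A<k$. For such $A$ one must first approximate it, within the class SR$_k(\epsilon)$, by matrices of rank at least $k$ realizing the prescribed signs $\epsilon_{\rho+1},\dots,\epsilon_k$ in the orders beyond its rank $\rho$, and only then apply the Gaussian smoothing (a two-step approximation). Establishing this preliminary within-class perturbation is delicate, since SR$_k(\epsilon)$ is not closed under addition and one cannot simply add a small STP matrix; it is exactly the point where the finer structure of sign regular matrices, rather than the soft Cauchy--Binet bookkeeping, is needed. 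The strict total positivity of the Gaussian kernel is the substantive classical fact invoked throughout, which I would import rather than reprove.
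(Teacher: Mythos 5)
Your smoothing step is exactly the classical Gantmacher--Krein argument that the paper alludes to (the paper itself does not prove Theorem~\ref{G-K}; it cites \cite{GK50} and notes only that the proof uses the total positivity of the Gaussian kernel): set $A_t:=G_m(t)\,A\,G_n(t)$, expand by Cauchy--Binet, observe that all coefficients $\det G_m(t)[I\mid K]\det G_n(t)[L\mid J]$ are strictly positive while every nonzero $\det A[K\mid L]$ with $|K|=|L|=r\le k$ has sign $\epsilon_r$, and let $t\to0^{+}$. That half is correct, including the STP of the Gaussian via the generalized Vandermonde factorization.

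However, the obstacle you flag at the end is a genuine gap that your write-up acknowledges but does not close, and it is where the real content of the theorem lies. Since $G_m(t)$ and $G_n(t)$ are invertible, $\operatorname{rank}A_t=\operatorname{rank}A$, so if $\rho:=\operatorname{rank}A<k$ then every minor of $A_t$ of order $r\in(\rho,k]$ vanishes identically and $A_t$ is never SSR$_k(\epsilon)$; your argument therefore proves density only within the subclass of SR$_k(\epsilon)$ matrices of rank at least $k$. The excluded case is not marginal: the all-ones matrix $J$ is SR$_2(\epsilon)$ with $\epsilon=(1,-1)$ (all $2\times2$ minors vanish), and no Gaussian sandwiching of $J$ can produce a nonzero --- let alone negative --- $2\times2$ minor; one instead needs a direct construction, e.g.\ for $m=n=2$ the matrix with rows $(1,1)$ and $(1,1-\delta)$ is SSR$(\epsilon)$ for $\epsilon=(1,-1)$ and converges to $J$ as $\delta\to0^{+}$. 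The missing first step --- perturbing an arbitrary rank-deficient SR$_k(\epsilon)$ matrix, within the class, to one of rank at least $k$ whose new nonzero minors realize the prescribed signs $\epsilon_{\rho+1},\dots,\epsilon_k$ (note that SR$_k(\epsilon)$ is not closed under addition, and even the existence of SSR$(\epsilon)$ matrices for an arbitrary sign pattern $\epsilon$ requires a construction) --- is precisely the substantive part of Gantmacher--Krein's proof, and as submitted your proposal leaves it open. So: right approach, correct for the full-rank case, incomplete as a proof of the stated theorem.
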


The following preliminary lemma will be handy in classifying linear $\mathcal{SSR}$-preservers using linear $\mathcal{SR}$-preservers. We provide the proof for completeness.

\begin{lemma}\cite[Lemma 1]{BHJ85}\label{lemma_preserver}
	Let $S$ be a subset of a finite-dimensional real vector space $V$. Then $P(S)\subseteq P(\overline{S}) \subseteq P(\mathrm{span}(S))$.
\end{lemma}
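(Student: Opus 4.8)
The plan is to establish the two set-inclusions separately, the workhorse being the elementary fact that a linear map commutes with the operation of taking linear span: for any subset $T\subseteq V$ and any linear $\mathcal{L}$, one has $\mathcal{L}(\mathrm{span}(T))=\mathrm{span}(\mathcal{L}(T))$. I would also record at the outset that, since $V$ is finite-dimensional, every linear endomorphism of $V$ is continuous, and every finite-dimensional subspace of $V$ is closed.

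For the first inclusion $P(S)\subseteq P(\overline{S})$, fix $\mathcal{L}\in P(S)$, so that $\mathcal{L}(S)=S$. Writing $W:=\mathrm{span}(S)$, the span-commuting identity gives $\mathcal{L}(W)=\mathrm{span}(\mathcal{L}(S))=\mathrm{span}(S)=W$; thus $\mathcal{L}$ restricts to a surjective linear self-map of the finite-dimensional space $W$, and hence $\mathcal{L}|_W$ is a linear isomorphism of $W$, i.e.\ a homeomorphism of $W$ onto itself. Since $W$ is closed in $V$ and $S\subseteq W$, the closure $\overline{S}$ (taken in $V$) coincides with the closure of $S$ within $W$. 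Because a homeomorphism carries the closure of a set to the closure of its image, we obtain $\mathcal{L}(\overline{S})=\overline{\mathcal{L}(S)}=\overline{S}$, that is, $\mathcal{L}\in P(\overline{S})$.

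For the second inclusion $P(\overline{S})\subseteq P(\mathrm{span}(S))$, I would first observe that $\mathrm{span}(\overline{S})=\mathrm{span}(S)$: the inclusion $\supseteq$ is clear, while $\subseteq$ follows since $\overline{S}\subseteq\overline{\mathrm{span}(S)}=\mathrm{span}(S)$, again using that $\mathrm{span}(S)$ is a closed subspace. Now for $\mathcal{L}\in P(\overline{S})$ the span-commuting identity yields $\mathcal{L}(\mathrm{span}(S))=\mathcal{L}(\mathrm{span}(\overline{S}))=\mathrm{span}(\mathcal{L}(\overline{S}))=\mathrm{span}(\overline{S})=\mathrm{span}(S)$, so $\mathcal{L}\in P(\mathrm{span}(S))$.

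The one genuinely non-formal point — and the step I would flag as the main obstacle — is the reverse inclusion $\overline{S}\subseteq\mathcal{L}(\overline{S})$ inside the first containment. Mere continuity of $\mathcal{L}$ only delivers $\mathcal{L}(\overline{S})\subseteq\overline{\mathcal{L}(S)}=\overline{S}$; to upgrade this to an equality one cannot work with $\mathcal{L}$ on all of $V$, where it need not be invertible, but must pass to the invariant subspace $W=\mathrm{span}(S)$, on which surjectivity forces bijectivity and hence the homeomorphism property that makes closures transform exactly. Everything else is a formal manipulation of spans and closures.
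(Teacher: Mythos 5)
Your proof is correct and follows essentially the same route as the paper's: both restrict $\mathcal{L}$ to the invariant subspace $\mathrm{span}(S)$, use finite-dimensionality to conclude it acts there as a homeomorphism (which is exactly how the paper upgrades $\mathcal{L}(\overline{S})\subseteq\overline{S}$ to equality), and then deduce $\mathrm{span}(\overline{S})=\mathrm{span}(S)$ from closedness of the span. The paper's version is simply a terser statement of your argument, so the step you flagged as the main obstacle is precisely the one the paper resolves the same way.
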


\begin{proof}
	Let $\mathcal{L}\in P(S)$. Then $\mathcal{L}$ maps $\mathrm{span}(S)$ homeomorphically onto itself, since dim$V<\infty$. As $\overline{S}\subseteq \mathrm{span}(S)$, it follows that $\mathrm{span}(S)=\mathrm{span}(\overline{S})$, and $\mathcal{L}(\overline{S})=\overline{\mathcal{L}(S)}=\overline{S}$.
\end{proof}
First, we prove certain propositions which will be used in proving Theorem~\ref{D} for both the $m=n$ and $m\neq n$ cases. Let $\mathcal{L}:\mathbb{R}^{m\times n}\to\mathbb{R}^{m\times n}$ be a linear transformation that maps $\mathcal{SR}_2$ onto itself, where $m,n\geq 2$ are integers such that $\max\{m,n\}\geq3$. If $m\neq n$, then we assume without loss of generality that $m>n$, i.e., the number of rows are more than the number of columns. The case when $m<n$ is dealt similarly by pre -- and post -- composing $\mathcal{L}$ with $A\mapsto A^T$.

Since $E_{ij}\in \mathcal{SR}_2$ for all $1\leq i\leq m$, $1\leq j\leq n$, and $\mathcal{L}(\mathcal{SR}_2)=\mathcal{SR}_2$, it follows that $\mathcal{L}$ maps $\mathbb{R}^{m\times n}$ onto itself. Hence $\mathcal{L}^{-1}$ exists and $\mathcal{L}^{-1}\in P(\mathcal{SR}_2)$.

\begin{prop}\label{SR_square_image_same_epsilon_1_sign}
	Let $\mathfrak{B}$ be the ordered basis~\eqref{basis}. Then $\epsilon_1(\mathcal{L}(E_{ij}))$ has the same sign for all $E_{ij}\in \mathfrak{B}$.
\end{prop}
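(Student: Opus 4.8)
The plan is to exploit that $\mathcal{L}$ is a linear bijection with $\mathcal{L}(\mathcal{SR}_2)=\mathcal{SR}_2$, so that membership in $\mathcal{SR}_2$ is detected on both sides: $M\in\mathcal{SR}_2 \iff \mathcal{L}(M)\in\mathcal{SR}_2$. Since each basis matrix $E_{ij}$ lies in $\mathcal{SR}_2$ and $\mathcal{L}$ is injective, every image $\mathcal{L}(E_{ij})$ is a \emph{nonzero} $\mathcal{SR}_2$ matrix and hence carries a well-defined sign $\epsilon_1(\mathcal{L}(E_{ij}))\in\{1,-1\}$, namely the common sign of its nonzero entries. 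It therefore suffices to prove that $\epsilon_1(\mathcal{L}(E_{ij}))=\epsilon_1(\mathcal{L}(E_{kl}))$ for any two distinct basis matrices. Writing $A:=\mathcal{L}(E_{ij})$ and $B:=\mathcal{L}(E_{kl})$, I argue by contradiction and assume, after possibly relabelling the pair, that $\epsilon_1(A)=1$ and $\epsilon_1(B)=-1$; thus $A\ge 0$ and $B\le 0$ entrywise, with both nonzero.

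The engine of the proof is a one-parameter family. A direct minor computation shows that for $(i,j)\neq(k,l)$ the matrix $E_{ij}+tE_{kl}$ lies in $\mathcal{SR}_2$ for every $t>0$: its only nonzero entries are $1$ and $t$ (so all nonzero $1\times 1$ minors have sign $+1$), and it has at most one nonzero $2\times 2$ minor, of value $\pm t$, so the $\epsilon_2$-condition holds vacuously or with a single sign. For $t<0$, by contrast, the entries $1$ and $t$ have opposite signs, so $E_{ij}+tE_{kl}\notin\mathcal{SR}_2$. Transporting the first fact through $\mathcal{L}$ gives $A+tB=\mathcal{L}(E_{ij}+tE_{kl})\in\mathcal{SR}_2$ for all $t>0$, which is all I shall use.

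The heart of the argument is to analyse the entries of $A+tB$ as $t$ ranges over $(0,\infty)$. Because $A\ge 0$ and $B\le 0$, each entry $(A+tB)_{pq}=A_{pq}-t\lvert B_{pq}\rvert$ is nonincreasing in $t$. If some cell keeps $A$ positive while $B$ vanishes there (i.e.\ $\operatorname{supp}(A)\not\subseteq\operatorname{supp}(B)$), that entry stays positive for all $t$, whereas any cell with $B_{pq}<0$ becomes negative for large $t$; taking $t$ large then exhibits both a positive and a negative entry in $A+tB$, contradicting $A+tB\in\mathcal{SR}_2$. A symmetric argument at small $t>0$ (a cell with $A_{pq}=0<\,-B_{pq}$ is negative for every $t>0$, while any cell in $\operatorname{supp}(A)$ is positive for small $t$) forces $\operatorname{supp}(B)\subseteq\operatorname{supp}(A)$, so the two supports coincide. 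On this common support each entry crosses zero at time $t_{pq}=A_{pq}/\lvert B_{pq}\rvert>0$, and if two such crossing times differed, an intermediate $t$ would again produce entries of both signs. Hence all crossing times equal a single $\lambda>0$, which gives $A=-\lambda B$.

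Finally, $A=-\lambda B$ means $\mathcal{L}(E_{ij})=\mathcal{L}(-\lambda E_{kl})$, so injectivity of $\mathcal{L}$ forces $E_{ij}=-\lambda E_{kl}$ with $\lambda>0$, which is impossible for distinct basis matrices. This contradiction yields $\epsilon_1(A)=\epsilon_1(B)$, and as the pair was arbitrary, all the signs $\epsilon_1(\mathcal{L}(E_{ij}))$ agree. I expect the main obstacle to be exactly the support-and-crossing-time analysis of the third paragraph: ruling out the opposite-sign case amounts to showing that the entire ray $\{A+tB:t>0\}$ can remain inside $\mathcal{SR}_2$ only when $A$ and $B$ are negatively proportional, and packaging this cleanly — rather than the routine minor computation and the bijectivity bookkeeping — is where the real work lies.
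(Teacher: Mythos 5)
Your proposal is correct and follows essentially the same route as the paper: both push the ray $E_{ij}+cE_{kl}\in\mathcal{SR}_2$ ($c>0$) through $\mathcal{L}$ and derive a contradiction from the entry signs of $\mathcal{L}(E_{ij})+c\,\mathcal{L}(E_{kl})$, first forcing the supports to coincide and then finding a $c$ strictly between two distinct entrywise ratios (your ``crossing times'') that produces entries of both signs. The only cosmetic difference is bookkeeping: the paper excludes the proportional case $U=\alpha V$ up front via injectivity of $\mathcal{L}$, whereas you invoke injectivity at the end to rule out $A=-\lambda B$ --- the same argument in a different order.
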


\begin{proof}
	Indeed, suppose that there exist two distinct $E_{ef}$ and $E_{kl}$ in $\mathfrak{B}$ such that $\mathcal{L}(E_{ef})=U=(u_{ij})_{i,j=1}^{m,n}$ and $\mathcal{L}(E_{kl})=V=(v_{ij})_{i,j=1}^{m,n}$ but $\epsilon_1(U)\neq\epsilon_1(V)$. Without loss of generality, assume that $\epsilon_1(U)=1$ and $\epsilon_1(V)=-1$. Therefore, $u_{ij}\geq 0\geq v_{ij}$ for all $1\leq i\leq m$, $1\leq j\leq n$.
	
	Note the following facts about the matrices $U$ and $V$:
	\begin{itemize}
		\item[(i)] $U, V\neq0_{m\times n}$, since $\mathcal{L}$ is invertible.
		
		\item[(ii)] \textit{$u_{st}\neq0$ if and only if $v_{st}\neq0$ where $1\leq s\leq m,$ $1\leq t\leq n$.} Suppose not; then some $u_{st}>0$ but $v_{st}=0$. Note that for all $c\geq0$, $E_{ef}+cE_{kl}\in \mathcal{SR}_2$, and thus, $\mathcal{L}(E_{ef}+cE_{kl})=U+cV\in \mathcal{SR}_2$. Now, $(U+cV)_{st}=u_{st}>0$. Since $V\neq0$ therefore there exist $1\leq q\leq m,$ $1\leq r\leq n$ such that $v_{qr}<0$. Hence, we can choose $c>0$ such that $(U+cV)_{qr}<0$, a contradiction.
		
		\item[(iii)] \textit{There exist at least two non-zero entries in $U$ and $V$.} Suppose instead that $u_{ij}$ is the only non-zero entry in $U$. Again, we can choose $c>0$ such that $\mathcal{L}(E_{ef}+cE_{kl})=0_{m\times n}$, a contradiction.
		
		\item[(iv)] $U\neq\alpha V$ for any $\alpha\in\mathbb{R}$.
	\end{itemize}
	From the above observations, there exist $1\leq i,s\leq m$ and $1\leq j,t\leq n$ such that $u_{ij}, v_{ij}, u_{st}, v_{st}\neq0$ and $\frac{u_{ij}}{v_{ij}}\neq\frac{u_{st}}{v_{st}}$. Without loss of generality, assume that $\frac{-u_{ij}}{v_{ij}}<\frac{-u_{st}}{v_{st}}$. Now choose $c$ such that $0<\frac{-u_{ij}}{v_{ij}}<c<\frac{-u_{st}}{v_{st}}$. For this $c>0$, $\mathcal{L}(E_{ef}+cE_{kl})\in \mathcal{SR}_2$ has a positive and a negative entry, a contradiction. Hence $\epsilon_1(\mathcal{L}(E_{ij}))$ has the same sign for all $i,j$.
\end{proof}

\begin{rem}\label{Ato-Awlog}
	Note that $\mathcal{L}$ is a linear $\mathcal{SR}_2$-preserver if and only if $\mathcal{L}$ composed with the map $A\mapsto-A$ is an $\mathcal{SR}_2$-preserver. So without loss of generality we assume $\epsilon_1(\mathcal{L}(E_{ij}))=1\ \forall~E_{ij}\in\mathfrak{B}$.
\end{rem} 

\begin{prop}\label{SR_square_monomial}
	Let $L\in\mathbb{R}^{mn\times mn}$ be the matrix representation of $\mathcal{L}$ with respect to the ordered basis $\mathfrak{B}$ as in~\eqref{basis}. Then $L$ is a monomial matrix.
\end{prop}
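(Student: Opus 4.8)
The plan is to read off monomiality from the characterization recalled in notation~(vi): a square matrix is monomial precisely when both it and its inverse are entrywise nonnegative. Since $L$ is invertible (as $\mathcal{L}$ is), it therefore suffices to prove that $L\geq 0$ and $L^{-1}\geq 0$. Translating back through the basis $\mathfrak{B}$, the columns of $L$ are the coordinate vectors of the images $\mathcal{L}(E_{ij})$, and the columns of $L^{-1}$ are the coordinate vectors of the $\mathcal{L}^{-1}(E_{pq})$; thus the two inequalities are equivalent to $\mathcal{L}(E_{ij})\geq 0$ for all $i,j$ and $\mathcal{L}^{-1}(E_{pq})\geq 0$ for all $p,q$.

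The first of these is immediate. By the normalization fixed in Remark~\ref{Ato-Awlog} together with Proposition~\ref{SR_square_image_same_epsilon_1_sign}, we have $\epsilon_1(\mathcal{L}(E_{ij}))=1$ for every basis element, which says exactly that each $\mathcal{L}(E_{ij})$ has only nonnegative entries. Hence $L\geq 0$. As a consequence, for any entrywise nonnegative $X=\sum_{i,j} x_{ij}E_{ij}$ we get $\mathcal{L}(X)=\sum_{i,j} x_{ij}\,\mathcal{L}(E_{ij})\geq 0$; that is, $\mathcal{L}$ carries the nonnegative cone of $\mathbb{R}^{m\times n}$ into itself.

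For $\mathcal{L}^{-1}(E_{pq})\geq 0$ I would use that $\mathcal{L}^{-1}$ is again a linear $\mathcal{SR}_2$-preserver, so Proposition~\ref{SR_square_image_same_epsilon_1_sign} applies to it as well and yields a single sign $\delta\in\{1,-1\}$ with $\epsilon_1(\mathcal{L}^{-1}(E_{pq}))=\delta$ for all $p,q$; in particular each $\mathcal{L}^{-1}(E_{pq})$ is entrywise sign-definite. The crux is to rule out $\delta=-1$, and here the cone-preservation from the previous step does the work. If $\delta=-1$, fix any $p,q$ and set $W:=\mathcal{L}^{-1}(E_{pq})$, so $W\leq 0$ and $W\neq 0$ by invertibility. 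Then $-W\geq 0$ lies in the nonnegative cone, so the previous paragraph gives $\mathcal{L}(-W)\geq 0$; but $\mathcal{L}(-W)=-E_{pq}$, which has a strictly negative $(p,q)$ entry, a contradiction. Hence $\delta=1$, every $\mathcal{L}^{-1}(E_{pq})$ is nonnegative, and $L^{-1}\geq 0$. Combining with $L\geq 0$ and invertibility, notation~(vi) shows that $L$ is monomial.

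I expect the only genuine obstacle to be the sign-fixing in the last step: Proposition~\ref{SR_square_image_same_epsilon_1_sign} alone only tells us that the images of the matrix units under $\mathcal{L}^{-1}$ share a common sign, not that this sign is positive, and the normalization of Remark~\ref{Ato-Awlog} was already spent on $\mathcal{L}$. The short contradiction above, leveraging that $\mathcal{L}$ preserves the nonnegative cone, is what forces $\delta=1$.
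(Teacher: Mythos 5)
Your proof is correct and follows essentially the same route as the paper: establish $L\geq 0$ from the normalization $\epsilon_1(\mathcal{L}(E_{ij}))=1$, apply Proposition~\ref{SR_square_image_same_epsilon_1_sign} to $\mathcal{L}^{-1}\in P(\mathcal{SR}_2)$ to get a uniform sign for $L^{-1}$, and rule out the negative sign. Your cone-preservation contradiction (applying $\mathcal{L}$ to $-W$ and landing on $-E_{pq}$) is just an unpacked version of the paper's one-line observation that $L\geq 0$ and $LL^{-1}=I$ force $L^{-1}\geq 0$, since a nonnegative matrix times a nonpositive one cannot equal the identity.
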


\begin{proof}
	From Remark~\ref{Ato-Awlog}, $\epsilon_1(\mathcal{L}(E_{ij}))=1$ for all $E_{ij}\in\mathfrak{B}$. Thus, $L\geq 0$. Since $\mathcal{L}^{-1}$ is also a linear $\mathcal{SR}_2$-preserver, by Proposition~\ref{SR_square_image_same_epsilon_1_sign} either $L^{-1}\geq 0$ or $L^{-1}\leq 0$. But, $L\geq 0$ and $LL^{-1}=I$, therefore $L^{-1}\geq 0$. Hence $L$ is a monomial matrix.
\end{proof}

\begin{rem}\label{Sij_singleton}
	Since $L$ is a monomial matrix, $S_{ij}$ is a non-empty singleton set for all $1\leq i\leq m$, $1\leq j\leq n$. Also, $S_{ij}\cap S_{kl}=\emptyset$ for $(i,j)\neq(k,l)$.
\end{rem}

We now prove Theorem~\ref{D} for $m=n\geq3$ using Proposition~\ref{SR_square_monomial}.

\begin{proof}[Proof of Theorem~\ref{D}~(2)$\implies$(3) for $m=n$]
	Let $\mathcal{L}:\mathbb{R}^{n\times n}\to\mathbb{R}^{n\times n}$ for $n\geq3$ be a linear  map sending the class of SR$_2$ matrices onto itself. Note that Proposition~\ref{SR_square_monomial} now holds. We now split the proof into several propositions.
	
	\begin{prop}\label{SR_square_S11SnnS1nSn1_elements}
		For $\mathcal{L}\in P(\mathcal{SR}_2)$, the element in each of the sets $S_{11},\ S_{nn},\ S_{1n}$, and $S_{n1}$ must be among the following: $E_{11},\ E_{nn},\ E_{1n},\ \text{or}\ E_{n1}$.
	\end{prop}
	
	\begin{proof}
		First, we will prove the result for $S_{11}$. Suppose that $S_{11}\neq\{E_{11}\}, \{E_{nn}\}, \{E_{1n}\}, \{E_{n1}\}$. Let $J(c)$ be the matrix of size $n\times n$ obtained by multiplying the $(1,1)$ entry of $J=J_{n\times n}$ by $c$. Then $J(c)\in \mathcal{SR}_2$ for $c>0$ and hence $\mathcal{L}(J(c)):=Y(c)\in \mathcal{SR}_2$. Note that all the entries of $Y(c)$ are non-zero by Remark~\ref{Sij_singleton}. Now, consider the following cases.
		
		If $S_{11}=\{E_{1k}\}$ where $k\neq1,n$, consider the following $2 \times 2$ minors of $Y(c)$:
		\[ \det Y(c) \begin{pmatrix} 1,n \\ 1,k \end{pmatrix}
		= \det\begin{pmatrix}
			y_{11} & cy_{1k} \\
			y_{n1} & y_{nk}
		\end{pmatrix}, \qquad
		\det Y(c) \begin{pmatrix} 1,n \\ k,n \end{pmatrix}
		= \det\begin{pmatrix}
			cy_{1k} & y_{1n} \\
			y_{nk} & y_{nn}
		\end{pmatrix}. \]
		One can choose $c>0$ such that these minors have opposite signs, a contradiction. 
		
		Similarly, we can show that $S_{11}\neq\{E_{k1}\},\ \{E_{nk}\},\ \{E_{kn}\}$ for $k\neq1,n$. Now if $S_{11}=\{E_{ij}\}$ where
		$\{i,j\}\neq\{1,n\}$, then we get a contradiction by considering the $2 \times 2$ minors of $Y(c)$:
		\[ \det Y(c) \begin{pmatrix} 1,i \\ 1,j \end{pmatrix}
		= \det\begin{pmatrix}
			y_{11} & y_{1j} \\
			y_{i1} & cy_{ij}
		\end{pmatrix}, \qquad
		\det Y(c) \begin{pmatrix} i,n \\ 1,j \end{pmatrix}
		= \det\begin{pmatrix}
			y_{i1} & cy_{ij} \\
			y_{n1} & y_{nj}
		\end{pmatrix}.\]
		By a similar arguement, one can show this for each of $S_{nn},\ S_{1n}$, and $S_{n1}$ by multiplying the $(n,n),\ (1,n)$, and $(n,1)$ entries, respectively, of $J=J_{n\times n}$ by $c>0$.
	\end{proof}
	
	\begin{prop}\label{SR_square_S11Snn_S1nSn1_combo}
		For $\mathcal{L}\in P(\mathcal{SR}_2)$, the following pairwise combinations are possible.
		\begin{itemize}
			\item[(i)] $S_{11}=\{E_{11}\}\ \text{and} \ S_{nn}=\{E_{nn}\}, \ \text{or} \ S_{11}=\{E_{nn}\} \ \text{and}\ S_{nn}=\{E_{11}\}, \ \text{or}$ \\
			$S_{11}=\{E_{1n}\}\ \text{and}\ S_{nn}=\{E_{n1}\}, \ \text{or}\ S_{11}=\{E_{n1}\} \ \text{and}\ S_{nn}=\{E_{1n}\}$.
			
			\item[(ii)] $S_{1n}=\{E_{1n}\} \ \text{and}\ S_{n1}=\{E_{n1}\}, \ \text{or}\ S_{1n}=\{E_{n1}\} \ \text{and}\ S_{n1}=\{E_{1n}\}, \ \text{or}$ \\
			$S_{1n}=\{E_{11}\} \ \text{and}\ S_{n1}=\{E_{nn}\}, \ \text{or}\ S_{1n}=\{E_{nn}\} \ \text{and}\ S_{n1}=\{E_{11}\}$.
		\end{itemize}
	\end{prop}
	\begin{proof} First, we prove (i). From Proposition~\ref{SR_square_S11SnnS1nSn1_elements}, we have that $S_{11}$ equals one of the sets $\{E_{11}\},\{E_{nn}\}$, $\{E_{n1}\}, \{E_{1n}\}$, and similarly, $S_{nn}$ equals one of $\{E_{11}\}, \{E_{nn}\}, \{E_{n1}\}, \{E_{1n}\}$. Now, out of sixteen possible combinations of $S_{11}$ and $S_{nn}$, the four cases wherein $S_{11}=S_{nn}$ are discarded straightaway because of the ``monomiality" of $L$. It remains to show that eight other combinations of $S_{11}$ and $S_{nn}$ mentioned below are not possible:
		\begin{align*}
			&S_{11}=\{E_{11}\} ~\text{and}~ S_{nn}=\{E_{n1}\}, \quad 
			S_{11}=\{E_{11}\} ~\text{and}~ S_{nn}=\{E_{1n}\}, \\
			&S_{11}=\{E_{nn}\} ~\text{and}~ S_{nn}=\{E_{n1}\}, \quad 
			S_{11}=\{E_{nn}\} ~\text{and}~ S_{nn}=\{E_{1n}\}, \\
			&S_{11}=\{E_{n1}\} ~\text{and}~ S_{nn}=\{E_{11}\}, \quad
			S_{11}=\{E_{n1}\} ~\text{and}~ S_{nn}=\{E_{nn}\}, \\
			&S_{11}=\{E_{1n}\} ~\text{and}~ S_{nn}=\{E_{11}\}, \quad
			S_{11}=\{E_{1n}\} ~\text{and}~ S_{nn}=\{E_{nn}\}.
		\end{align*}
	Indeed, assume that $S_{11}=\{E_{11}\}$ and $S_{nn}=\{E_{n1}\}$. Let $J(c)$ be the $n\times n$ matrix obtained by multiplying the $(1,1)$ and $(n,n)$ entries of $J=J_{n\times n}$ by $c$. Then $J(c)\in \mathcal{SR}_2$ for $c>0$ and hence $\mathcal{L}(J(c)):=Y(c)\in \mathcal{SR}_2$. Now, consider the following $2 \times 2$ minors of $Y(c)$:
		\[\det Y(c) \begin{pmatrix}1,2\\1,2
		\end{pmatrix}=cy_{11}y_{22}-y_{12}y_{21},
		\qquad
		\det Y(c)\begin{pmatrix}2,n\\1,2
		\end{pmatrix}=y_{21}y_{n2}-cy_{22}y_{n1}.\]
	We can always choose $c>0$ such that the above two minors are of opposite signs, which is a contradiction. Similarly, for the remaining seven cases, we can always find two non-zero $2\times2$ minors of $Y(c)$ having opposite signs.
		
	Adapting the same argument as in the preceding half of this proof shows that (ii) holds.
	\end{proof}
	
	\begin{rem}
		In Proposition~\ref{SR_square_S11Snn_S1nSn1_combo}(i), we can assume without loss of generality that
		\begin{equation} \label{SR_square_S11Snn_wlog}
			S_{11}=\{E_{11}\}~\text{and}~S_{nn}=\{E_{nn}\}
		\end{equation}
		since $\mathcal{L}\in P(\mathcal{SR}_2)$ if and only if $\mathcal{L}$ composed with the maps $A\mapsto AP_n$ and $A\mapsto P_nA$ is a linear $\mathcal{SR}_2$-preserver. Now, using Proposition~\ref{SR_square_monomial} and equation~\eqref{SR_square_S11Snn_wlog} in Proposition~\ref{SR_square_S11Snn_S1nSn1_combo}(ii), either
		\begin{align*}
			&S_{11}=\{E_{11}\},~ S_{nn}=\{E_{nn}\},~ S_{1n}=\{E_{1n}\}, ~\text{and}~ S_{n1}=\{E_{n1}\},~ \text{or} \\
			&S_{11}=\{E_{11}\},~ S_{nn}=\{E_{nn}\},~ S_{1n}=\{E_{n1}\}, ~\text{and}~ S_{n1}=\{E_{1n}\}.
		\end{align*}
		We henceforth assume without loss of generality that
		\begin{equation}\label{SR_square_S11SnnS1nSn1_wlog}
			S_{11}=\{E_{11}\},~ S_{nn}=\{E_{nn}\},~ S_{1n}=\{E_{1n}\}, ~\text{and}~ S_{n1}=\{E_{n1}\}
		\end{equation}
		since $\mathcal{L}\in P(\mathcal{SR}_2)$ if and only if $\mathcal{L}$ composed with the map $A\mapsto A^T$ is a linear $\mathcal{SR}_2$-preserver.
	\end{rem}
	
	\begin{prop} \label{SR_square_firstLast_rowTOrow_colTOcol}
		Let $\mathcal{L}\in P(\mathcal{SR}_2)$. Then
		\begin{itemize}
			\item[(i)] $\mathcal{L}$ maps the first (and last) row and column of its arguments entirely to the first (and last) row and column, respectively.
			\item[(ii)] $\mathcal{L}$ maps each row and column of its arguments entirely to some row and column, respectively.
		\end{itemize}	
	\end{prop}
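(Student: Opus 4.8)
The plan is to work entirely in the monomial language of Proposition~\ref{SR_square_monomial}: write $\mathcal{L}(E_{ij})=c_{ij}E_{\sigma(i,j)}$ with $c_{ij}>0$, where $\sigma$ is a bijection of $[n]\times[n]$, and keep the normalization $S_{11}=\{E_{11}\},\ S_{nn}=\{E_{nn}\},\ S_{1n}=\{E_{1n}\},\ S_{n1}=\{E_{n1}\}$ from \eqref{SR_square_S11SnnS1nSn1_wlog}. The single engine I would use is the following transfer principle: if a support $T\subseteq[n]\times[n]$ has the property that \emph{every} nonnegative matrix supported on $T$ is $\mathcal{SR}_2$, then $\sigma(T)$ has the same property, since $\mathcal{L}$ only rescales entries by the positive constants $c_{ij}$ (ranging over all positive fillings) and relocates them by $\sigma$. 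The usable half of the combinatorial characterization of such supports is a contrapositive: a support cannot be $\mathcal{SR}_2$ for all positive fillings once it contains two cells in strictly increasing position and two cells in strictly decreasing position, where in each case the two opposite corners of the spanned rectangle are vacant; such a support admits positive fillings with minors of both signs.

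To prove (i), fix $1<j<n$ and write $\sigma(1,j)=(p,q)$, aiming to show $p=1$. Single-row rank-one sources are $\mathcal{SR}_2$ for all positive fillings, so I would first test $E_{11}+E_{1j}+E_{1n}$, whose image is supported on $(1,1),(p,q),(1,n)$. Assuming $p>1$: if $1<q<n$, then $(1,1)$–$(p,q)$ is an increasing pair and $(p,q)$–$(1,n)$ a decreasing pair, each with vacant opposite corners, so by the engine the image is not $\mathcal{SR}_2$, a contradiction. The two edge cases are dispatched by pairing $(p,q)$ against the far corners: for $q=1$ the source $E_{1j}+E_{1n}+E_{nn}$ is $\mathcal{SR}_2$ and its image on $(p,1),(1,n),(n,n)$ contains the decreasing pair $(1,n)$–$(p,1)$ and the increasing pair $(p,1)$–$(n,n)$ (again with vacant corners), giving the contradiction; for $q=n$ the mirror source $E_{11}+E_{1j}+E_{n1}$ does the same. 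The remaining case $p=n,\ q\in\{1,n\}$ is excluded because those cells are the already-pinned images of $\sigma(n,1),\sigma(n,n)$, so injectivity of $\sigma$ forbids it. Hence $p=1$, i.e.\ the first row maps into the first row; the last row and the two extreme columns follow from the identical computation after cyclically relabelling the four pinned corners.

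For (ii) I would bootstrap from (i). Since the first row consists of $n$ cells injecting into the first row, it maps \emph{onto} the first row; likewise the last row and the two extreme columns map onto themselves, so $\sigma$ carries the inner $(n-2)\times(n-2)$ grid on rows and columns $2,\dots,n-1$ bijectively onto itself. I would then re-run the corner-location arguments of Propositions~\ref{SR_square_S11SnnS1nSn1_elements} and \ref{SR_square_S11Snn_S1nSn1_combo} together with the first-row computation above, now using inner-frame analogues of the matrices $J(c)$ and of the single-row sources, to conclude that this inner frame again sends rows to rows and columns to columns; peeling the grid frame by frame and inducting inward shows that every source row maps into a single row and every source column into a single column, which is exactly (ii).

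The step I expect to be the real obstacle is the passage from single-line tests to genuine collinearity. Rank-one (single-row or single-column) sources are by themselves too weak: a scattered image such as $(1,1),(p,1),(1,n)$ with $1<p<n$ is itself $\mathcal{SR}_2$ for all positive fillings, so testing a row against itself never forces its image onto one line. The decisive device is therefore the two-dimensional sources that pair a suspect image cell against the \emph{opposite} pinned corner, manufacturing an increasing and a decreasing vacant-corner pair simultaneously. Arranging these sources to be $\mathcal{SR}_2$ while their images fail $\mathcal{SR}_2$, and organizing the induction so that each successive inner frame inherits four genuinely usable ``corners'' to anchor the same trick, is the crux of the whole proposition.
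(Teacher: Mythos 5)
Your part (i) is correct, and it takes a genuinely different route from the paper: the paper scales a single row (or entry) of the dense all-ones matrix $J$ and exhibits two explicit $2\times 2$ minors of the entrywise-positive image $Y(c)$ with opposite signs, whereas you push sparse three-cell sources through a transfer principle on $\mathcal{SR}_2$-universal supports and detect failure via the vacant-corner sign criterion. Your edge-case handling -- the L-shaped sources $E_{1j}+E_{1n}+E_{nn}$ and $E_{11}+E_{1j}+E_{n1}$ for $q\in\{1,n\}$, and injectivity of $\sigma$ for $p=n$ -- is complete, and the transfer principle itself is sound since $\mathcal{L}$ carries positive fillings of a support $T$ onto positive fillings of $\sigma(T)$.

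Part (ii), however, has a genuine gap: the frame-peeling inference does not go through. Consider the map that is the identity on the boundary frame and reverses the inner columns, $\sigma(i,j)=(i,n+1-j)$ for $2\leq i,j\leq n-1$ (take $n\geq 4$). This map satisfies everything your argument establishes: it obeys the normalization \eqref{SR_square_S11SnnS1nSn1_wlog}, it passes all of part (i), its restriction to the inner grid is a bona fide inner $\mathcal{SR}_2$-preserver (it is $X\mapsto XP_{n-2}$), so \emph{no test supported inside the inner grid can exclude it}, and at every stage of the peeling each frame ``sends rows to rows and columns to columns'' within itself. Yet it violates (ii): the global column $j$ splits between column $j$ (its two boundary cells) and column $n+1-j$ (its inner cells). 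So the step from frame-local row/column structure to global collinearity is a non sequitur -- the dihedral ambiguity of each inner frame can no longer be normalized away (that freedom was spent on the outer corners), and nothing ties the boundary cells $(k,1),(k,n)$ of a line to its inner cells. Detecting the map above requires a cross-frame test: scale column $2$ of the global $J$ by $c>1$; the source is rank one, hence $\mathcal{SR}_2$, but the image has $\det\begin{pmatrix} c & 1\\ 1 & c\end{pmatrix}>0$ on rows $1,2$, columns $2,n-1$, and $\det\begin{pmatrix}1 & c\\ 1 & 1\end{pmatrix}<0$ on rows $1,2$, columns $1,2$. Moreover, your sparse engine cannot be patched to supply such a test: an anchored multi-row source like $E_{11}+E_{1n}+E_{k1}+E_{kj}+E_{kn}$ already has $2\times 2$ minors of both signs for every positive filling ($ac>0$ on columns $1,j$ and $-a'c<0$ on columns $j,n$), so it is not an admissible source at all. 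This is exactly why the paper's proof of (ii) abandons sparse supports and uses the dense rank-one $J(c)$ with the $k$th row scaled: that source is $\mathcal{SR}_2$ for all $c>0$, part (i) anchors $\sigma(k,1)=(s,1)$ in the first column and fixes the first and last rows, and the two minors of $Y(c)$ on rows $\{1,s\}$ and $\{s,n\}$ then force the entire image of row $k$ into row $s$. Replacing your peeling step by this one cross-frame argument (and its column analogue) is what is needed to complete (ii).
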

	
	\begin{proof}
		We will first show that $\mathcal{L}$ must map the first row of its arguments entirely to the first row. Again, let $J(c)$ be the matrix of size $n\times n$ obtained by multiplying the first row of $J=J_{n\times n}$ by $c>0$. Then $J(c)\in \mathcal{SR}_2$ with $\epsilon_1=1$ and hence $\mathcal{L}(J(c)):=Y(c)\in\mathcal{SR}_2$. Assume that $\mathcal{L}$ does not map the first row of its arguments entirely to the first row. Thus, there exists $1<k<n$ such that the $(1,k)$ position of the matrix $Y(c)$ is not occupied by the image of any element from the first row of $J(c)$. Using~\eqref{SR_square_S11SnnS1nSn1_wlog}, we obtain two $2 \times2$ minors of $Y(c)$:
		\[ \det Y(c)\begin{pmatrix}1,n\\1,k
		\end{pmatrix}=c\alpha
		y_{11}y_{nk}-y_{1k}y_{n1}, \qquad
		\det Y(c)\begin{pmatrix}1,n\\k,n
		\end{pmatrix}=y_{1k}y_{nn}-c\alpha
		y_{1n}y_{nk},\]
		where  either $\alpha=c$ or $\alpha=1$. Choosing $c$ large enough gives the above two minors of $Y(c)$ are of opposite signs, which is a contradiction. Similarly, the remaining assertions can be established by multiplying that particular row and column of $J=J_{n\times n}$ by $c>0$. This proves (i).
		
		To prove (ii), we begin by showing $\mathcal{L}$ must map the $k$th row of its arguments entirely to some row for $1<k<n$. Again, let $J(c)$ be the $n\times n$ matrix  obtained by multiplying the $k$th row of $J=J_{n\times n}$ by $c>0$. Then $J(c)\in \mathcal{SR}_2$ and hence $\mathcal{L}(J(c)):=Y(c)\in\mathcal{SR}_2$. By Proposition~\ref{SR_square_firstLast_rowTOrow_colTOcol}(i), $\mathcal{L}$ must map the first column of its arguments entirely to the first column and hence the $(k,1)$ element of $J(c)$ will be mapped to the first column. Let us say $\mathcal{L}$ maps it to the $(s,1)$ position of $Y(c)$, where $1<s<n$. Suppose that $\mathcal{L}$ does not map the $k$th row of $J(c)$ entirely to some row. Then there exists $1<j\leq n$ such that the $(s,j)$ position of the matrix $Y(c)$ is not occupied by the image of any element from the $k$th row of $J(c)$. By Proposition~\ref{SR_square_firstLast_rowTOrow_colTOcol}(i) and for sufficiently large $c$, we obtain a negative and a positive $2 \times 2$ minor of $Y(c)$:
		\[ \det Y(c)\begin{pmatrix}1,s\\1,j\end{pmatrix} = \det
		\begin{pmatrix}
			y_{11} & y_{1j} \\
			cy_{s1} & y_{sj}
		\end{pmatrix} < 0, \qquad
		\det Y(c)\begin{pmatrix}s,n\\1,j\end{pmatrix} = \det
		\begin{pmatrix}
			cy_{s1} & y_{sj} \\
			y_{n1} & y_{nj}
		\end{pmatrix}>0.\]
		Hence, $\mathcal{L}$ must map each row of its argument entirely to some row. Similarly, one can show that $\mathcal{L}$ must map each column of its argument entirely to some column by multiplying the $k$th column of the matrix $J=J_{n\times n}$ by $c>0$ where $1<k<n$.
	\end{proof}
	
	To summarize, we have used the transformations $A\mapsto-A$, $A\mapsto P_nA$, $A\mapsto AP_n$, and $A\mapsto A^T$ to assume that $\mathcal{L}\in P(\mathcal{SR}_2)$ has the following properties:
	\begin{itemize}
		\item[(i)] $\epsilon_1(\mathcal{L}(E_{ij}))=1$ for all $1\leq i,j\leq n$.
		
		\item[(ii)] $S_{11}=\{E_{11}\},~ S_{nn}=\{E_{nn}\},~ S_{1n}=\{E_{1n}\}, ~\text{and}~ S_{n1}=\{E_{n1}\}$.
		
		\item[(iii)] $\mathcal{L}$ maps the entire first (and last) row and column of its arguments to the first (and last) row and column, respectively.
		
		\item[(iv)] $\mathcal{L}$ maps every other row and column to some row and column, respectively.
	\end{itemize} 
	With the above information in hand, we are now ready to use induction on $n$. We first prove the base case, when $n=3$. Let $\mathcal{L}:\mathbb{R}^{3\times3}\to\mathbb{R}^{3\times3}$ be a linear map such that $\mathcal{L}(\mathcal{SR}_2)=\mathcal{SR}_2$.
	
	Let $\mathfrak{B}$ be as in~\eqref{basis}, where $m=n=3$. We have $S_{ij}=\{E_{ij}\}$ for all $1\leq i,j\leq 3$ using Propositions~\ref{SR_square_monomial}~and~\ref{SR_square_firstLast_rowTOrow_colTOcol}. Thus, $\mathcal{L}(E_{11})=l_{1}E_{11},\mathcal{L}(E_{22})=l_{2}E_{22},\ldots,\mathcal{L}(E_{32})=l_{9}E_{32}\in \mathcal{SR}_2$. Note that $l_{i}>0$ for all $1\leq i\leq 9$ by Remark~\ref{Ato-Awlog} and Proposition~\ref{SR_square_monomial}. 
	Since the rank one matrix $J = J_{3 \times 3}
	\in \mathcal{SR}_2$, hence
	\[\mathcal{L}(J)=Y=\begin{pmatrix}
		l_{1} & l_{4} & l_{5} \\
		l_{6} & l_{2} & l_{7} \\
		l_{8} & l_{9} & l_{3} 
	\end{pmatrix}\in \mathcal{SR}_2.\]
	We claim that $\mathcal{L}(J)$ has rank one. If not, it has at least one non-zero $2\times2$ minor, and all such minors are of the same sign, say non-negative without loss of generality. Now let
	\[J(c):=\begin{pmatrix}
		1 & 1 & c \\
		1 & 1 & 1 \\
		1 & 1 & 1
	\end{pmatrix}\in \mathcal{SR}_2\implies \mathcal{L}(J(c))=\begin{pmatrix}
		l_{1} & l_{4} & cl_{5} \\
		l_{6} & l_{2} & l_{7} \\
		l_{8} & l_{9} & l_{3} 
	\end{pmatrix}\in \mathcal{SR}_2.\]
	Choose $c>1$ sufficiently large such that $l_{4}l_{7}-cl_{5}l_{2}<0$. Thus, the first two columns of $\mathcal{L}(J(c))$ and hence of $\mathcal{L}(J)$ are linearly dependent. A similar consideration using $\mathcal{L}(J(c')^T)$ for $c'>1$ shows that $l_{6}l_{9}-c^{\prime}l_{2}l_{8}<0$ for $c^{\prime}$ large enough. Thus, the last two columns of $\mathcal{L}(J)$ are linearly dependent. Hence, $\mathcal{L}(J)$ is a rank 1 matrix. Now, let
	\[B=\begin{pmatrix}
		b_{11} & b_{12} & b_{13} \\
		b_{21} & b_{22} & b_{23} \\
		b_{31} & b_{32} & b_{33}
	\end{pmatrix}\in \mathcal{SR}_2 \implies \mathcal{L}(B)=\begin{pmatrix}
		l_{1}b_{11} & l_{4}b_{12} & l_{5}b_{13} \\
		l_{6}b_{21} & l_{2}b_{22} & l_{7}b_{23} \\
		l_{8}b_{31} & l_{9}b_{32} & l_{3}b_{33}
	\end{pmatrix}\in \mathcal{SR}_2.\] 
	Since $\mathcal{L}(J)$ has rank 1, we get that $\mathcal{L}$ maps $B \mapsto FBE$ (which is a positive diagonal equivalence), where $F=\begin{pmatrix}
		l_{1} & 0 & 0 \\
		0 & l_{6} & 0 \\
		0 & 0 & l_{8}
	\end{pmatrix}$ and $E=\begin{pmatrix}
		1 & 0 & 0 \\
		0 & l_{4}/l_{1} & 0 \\
		0 & 0 & l_{7}/l_{6}
	\end{pmatrix}$. This completes the base case.
	
	For the induction step, let $\mathcal{L}:\mathbb{R}^{n\times n}\to\mathbb{R}^{n\times n}$ with $n>3$ be a linear $\mathcal{SR}_2$-preserver. Let $A\in \mathcal{SR}_2$. By Proposition~\ref{SR_square_firstLast_rowTOrow_colTOcol}(i), the leading principal $(n-1)\times (n-1)$ submatrix of $A$ transforms into the leading principal $(n-1)\times (n-1)$  submatrix of $\mathcal{L}(A)$. Via padding SR$_2$ matrices by zeros $\widehat{A} \mapsto\widehat{A} \oplus (0)_{1 \times 1}$, we get that the natural restriction of $\mathcal{L}$ to the $(n-1)\times(n-1)$  leading principal submatrix is a linear $\mathcal{SR}_2$-preserver on $\mathbb{R}^{(n-1)\times(n-1)}$. By the induction hypothesis, the restriction of $\mathcal{L}$ is a composition of one or more of the following maps: (i)~$X\mapsto -X$, (ii)~$X\mapsto P_{n-1}X$, (iii)~$X\mapsto X^T$, and (iv)~$X\mapsto FXE$. In fact, it is a positive diagonal equivalence since the first row is transformed to the first row and $\epsilon_1(\mathcal{L}(E_{ij}))=1$ for all $1\leq i,j\leq n$. Thus $S_{ij}=\{E_{ij}\}$ for all $1\leq i,j\leq n-1$. By Proposition~\ref{SR_square_firstLast_rowTOrow_colTOcol}, $\mathcal{L}$ maps each row and column of its arguments entirely to some row and column, respectively, and hence 
	\begin{equation}\label{SR_sqaure_Sij=Eij}
		S_{ij}=\{E_{ij}\}~\text{for all}~ 1\leq i,j\leq n.
	\end{equation} 
	Since we may compose the inverse positive diagonal equivalence relative to the leading principal submatrix of size $(n-1)\times(n-1)$ with $\mathcal{L}$, we may assume without loss of generality that $A$ and $\mathcal{L}(A)$ have the same leading principal $(n-1) \times (n-1)$ submatrix. By~\eqref{SR_sqaure_Sij=Eij}, we have
	\[\mathcal{L}(E_{in})=c_iE_{in},~ \mathcal{L}(E_{ni})=k_iE_{ni} ~\text{for}~ 1\leq i\leq n-1, ~\text{and}~ \mathcal{L}(E_{nn})=dE_{nn}\]
	for some positive scalars $c_i$, $k_i$, and $d$. We next claim that
	$c_1=\cdots=c_{n-1}$, $k_1=\cdots=k_{n-1}$, and $d=c_1k_1$. This follows by applying $\mathcal{L}$ to the rank one matrix $J = J_{n \times n} \in \mathcal{SR}_2$ and then proceeding similarly to the $n=3$ case to get that $\mathcal{L}(J) \in \mathcal{SR}_2$ also has rank one. Hence as above, $\mathcal{L}$ maps $A$ to $FAE$ for some positive diagonal matrices $F$ and $E$. This concludes the induction step.
\end{proof}

We now show Theorem~\ref{D} for $m>n\geq2$ using the propositions at the start of this section.

\begin{proof}[Proof of Theorem~\ref{D}~(2)$\implies$(3) for $m\neq n$] 
	Let $m>n\geq2$ and $\mathcal{L}:\mathbb{R}^{m\times n}\to\mathbb{R}^{m\times n}$ be a linear map sending $\mathcal{SR}_2$ onto itself. We claim that $\mathcal{L}$ is a composition of one or more of the transformations listed in Theorem~\ref{D}(3). For convenience, again the proof is split into several propositions. The statements of these propositions are similar to the case $m=n\geq3$, but the proofs differ when $m>n\geq2$.
	
 \begin{prop}\label{SR_rectangle_S11SmnS1nSm1_elements}
		For $\mathcal{L}\in P(\mathcal{SR}_2)$, the element in each of the sets $S_{11}$, $S_{mn}$, $S_{1n}$, and $S_{m1}$ must be one of $E_{11}$, $E_{mn}$, $E_{1n}$, $E_{m1}$.
	\end{prop}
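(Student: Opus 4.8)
The plan is to adapt the argument of Proposition \ref{SR_square_S11SnnS1nSn1_elements} to the tall setting $m>n\geq 2$. By Remark \ref{Sij_singleton} each $S_{ij}$ is a singleton, so I fix one of the four sets, say $S_{11}=\{E_{pq}\}$, and suppose for contradiction that $(p,q)$ is \emph{not} a corner. Let $J(c)$ be obtained from $J=J_{m\times n}$ by multiplying its $(1,1)$ entry by $c$; then $J(c)\in\mathcal{SR}_2$ for $c>0$, so $Y(c):=\mathcal{L}(J(c))\in\mathcal{SR}_2$. Since $L$ is monomial (Proposition \ref{SR_square_monomial}) and, after Remark \ref{Ato-Awlog}, $\epsilon_1(\mathcal{L}(E_{ij}))=1$, every entry of $Y(c)$ is positive and $Y(c)$ agrees with $Y=\mathcal{L}(J)$ except that its $(p,q)$ entry is scaled to $cy_{pq}$. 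The strategy is then to exhibit two $2\times 2$ minors of $Y(c)$, both containing the position $(p,q)$, whose leading coefficients in $c$ have opposite signs; for $c$ large this produces two nonzero minors of opposite sign, contradicting $Y(c)\in\mathcal{SR}_2$. Running this for each of $S_{11},S_{mn},S_{1n},S_{m1}$ (scaling the corresponding corner entry $(1,1),(m,n),(1,n),(m,1)$ of $J$) gives the result, since the contradiction only depends on the scaled image position being a non-corner.

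The casework is organized by the location of the non-corner position $(p,q)$. If $(p,q)$ lies in the interior of the top or bottom edge, i.e.\ $p\in\{1,m\}$ and $1<q<n$ (possible only when $n\geq 3$), I would use the two minors $\det Y(c)\begin{pmatrix}1,m\\1,q\end{pmatrix}$ and $\det Y(c)\begin{pmatrix}1,m\\q,n\end{pmatrix}$ on rows $\{1,m\}$. If $(p,q)$ lies in the interior of the left or right edge, i.e.\ $q\in\{1,n\}$ and $1<p<m$, I would use $\det Y(c)\begin{pmatrix}1,p\\1,n\end{pmatrix}$ and $\det Y(c)\begin{pmatrix}p,m\\1,n\end{pmatrix}$ on columns $\{1,n\}$; here the tall shape $m\geq 3$ is exactly what guarantees an interior row $p$ has a neighbour both above and below. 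Finally, if $(p,q)$ is strictly interior ($1<p<m$, $1<q<n$), I would take $\det Y(c)\begin{pmatrix}1,p\\1,q\end{pmatrix}$ and $\det Y(c)\begin{pmatrix}p,m\\1,q\end{pmatrix}$. In each case a direct expansion shows the two determinants equal $\pm\,cy_{pq}(\cdots)+O(1)$ with opposite leading signs, so they disagree in sign for $c$ large, which is the desired contradiction.

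The main point of departure from the square case is that, since $m>n$, the transpose $A\mapsto A^T$ is not available as an $\mathcal{SR}_2$-preserver, so the row-type and column-type positions cannot be deduced from one another by symmetry and must each be handled directly — whereas the tall shape is precisely what lets the column-span minors on rows $\{1,p\},\{p,m\}$ be formed for every interior row. I expect the only genuine obstacle to be organizational: verifying for each of the five position types that the chosen row and column index sets are honest two-element subsets of $[1,m]$ and $[1,n]$, and in particular treating the degenerate case $n=2$, where the top/bottom-edge and strictly-interior types are vacuous and only the left/right-edge analysis is required.
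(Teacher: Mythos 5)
Your proposal is correct and takes essentially the same approach as the paper: scale one corner entry of $J$, use monomiality to locate the single scaled entry $cy_{pq}$ of $Y(c)$, and exhibit two $2\times 2$ minors through position $(p,q)$ whose leading terms in $c$ have opposite signs. The paper treats $n=2$ by a brief ad hoc argument and defers $m>n\geq 3$ to Proposition~\ref{SR_square_S11SnnS1nSn1_elements}, whereas you spell out a uniform three-case analysis, but the underlying argument is identical and your casework (including the degenerate $n=2$ situation and the use of $m\geq 3$) is complete and sound.
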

	
	\begin{proof}
		We prove this by contradiction. Suppose $S_{11}\neq\{E_{11}\}, \{E_{mn}\},	\{E_{1n}\}, \{E_{m1}\}$. We first consider the case $n=2$. Let
		\[J(c):=\begin{pmatrix}
			c & 1 \\
			1 & 1 \\
			\vdots & \vdots \\
			1 & 1
		\end{pmatrix}_{m\times2}\in\mathcal{SR}_2\implies\text{either}~ \mathcal{L}(J(c))=\begin{pmatrix}
			y_{11} & y_{12} \\
			\vdots & \vdots \\
			cy_{i1} & y_{i2} \\
			\vdots & \vdots \\
			y_{m1} & y_{m2}
		\end{pmatrix}~\text{or}~ \mathcal{L}(J(c))=\begin{pmatrix}
			y_{11} & y_{12} \\
			\vdots & \vdots \\
			y_{j1} & cy_{j2} \\
			\vdots & \vdots \\
			y_{m1} & y_{m2}
		\end{pmatrix}.\]
		Since $m>2$, we can find two $2\times2$ minors of $\mathcal{L}(J(c))$ which are of opposite signs for large $c$, a contradiction. Now, for $m>n\geq3$, the same arguments used in Proposition~\ref{SR_square_S11SnnS1nSn1_elements} can be used here.
		
		Similarly, we can show this for each of $S_{mn}$, $S_{1n}$, and $S_{m1}$ by multiplying the $(m,n)$, $(1,n)$, and $(m,1)$ entries, respectively, of $J=J_{m\times n}$ by $c>0$.
	\end{proof}
	\begin{prop}\label{Proposition_L(J)_rank_1}
		Let $m>n\geq2$. For $\mathcal{L}\in P(\mathcal{SR}_2)$, the matrix $\mathcal{L}(J_{m\times n})$ has rank one.
	\end{prop}
	\begin{proof}
		Let $J = J_{m\times n}$; we claim that the	matrix
		$\mathcal{L}(J)=\begin{pmatrix}
			y_{11} & \cdots & y_{1n} \\
			\vdots & \ddots & \vdots \\
			y_{m1} & \cdots & y_{mn}
		\end{pmatrix}$ has rank one. Indeed, if $\mathcal{L}(J)$ has rank 2, then it has at least one non-zero minor of size $2\times2$ and moreover such minors have the same sign, say non-negative without loss of generality. By Proposition~\ref{SR_rectangle_S11SmnS1nSm1_elements}, each of the sets $S_{11}$, $S_{mn}$, $S_{1n}$, and $S_{m1}$ must contain exactly one of the standard basis elements $E_{11}$, $E_{mn}$, $E_{1n}$, $E_{m1}$. Again without loss of generality assume that $S_{11} = \{E_{11}\}$. Now, consider the matrix
		\[J(c):=\begin{pmatrix}
			c & \cdots & 1 \\
			\vdots & \ddots & \vdots \\
			1 & \cdots & 1 
		\end{pmatrix}\in \mathcal{SR}_2 \implies \mathcal{L}(J(c))=\begin{pmatrix}
			cy_{11} & \cdots & y_{1n} \\
			\vdots & \ddots & \vdots  \\
			y_{m1} & \cdots & y_{mn}  
		\end{pmatrix}\in \mathcal{SR}_2.\] 
		But we can choose $c>0$ sufficiently small such that $cy_{11}y_{22}-y_{12}y_{21}<0$. Thus, the span of the last $m-1$ rows of $\mathcal{L}(J(c))$ has dimension one, and hence all $2\times2$ minors of $\mathcal{L}(J)$ in the last $m-1$ rows are zero.
		
		Now, among the sets $S_{1n}, S_{m1}$, and $S_{mn}$, exactly one of them must contain $E_{mn}$; suppose this is $S_{pq}$. By multiplying the $(p,q)$ element of $J$ by $c$ and proceeding as above, one can conclude that the span of the first $m-1$ rows of $\mathcal{L}(J)$ has dimension one. Thus, $\mathcal{L}(J)$ has rank one.
	\end{proof}
	
	\begin{prop}\label{SR_rectangle_S11Smn_S1nSm1_combo} 
		For $\mathcal{L}\in P(\mathcal{SR}_2)$, the following pairwise combinations are possible.
		\begin{itemize}
			\item[(i)] $S_{11}=\{E_{11}\} ~\text{and}~ S_{mn}=\{E_{mn}\}, ~\text{or}~ S_{11}=\{E_{mn}\} ~\text{and}~ S_{mn}=\{E_{11}\}, ~\text{or}$ \\
			$S_{11}=\{E_{1n}\} ~\text{and}~ S_{mn}=\{E_{m1}\}, ~\text{or}~ S_{11}=\{E_{m1}\} ~\text{and}~ S_{mn}=\{E_{1n}\}$.
			
			\item[(ii)] $S_{1n}=\{E_{1n}\} ~\text{and}~ S_{m1}=\{E_{m1}\},~\text{or}~ S_{1n}=\{E_{m1}\} ~\text{and}~ S_{m1}=\{E_{1n}\}, ~\text{or}$ \\
			$S_{1n}=\{E_{11}\} ~\text{and}~ S_{m1}=\{E_{mn}\},~\text{or}~ S_{1n}=\{E_{mn}\} ~\text{and}~ S_{m1}=\{E_{11}\}$.
		\end{itemize}
	\end{prop}

	\begin{proof} First, we prove (i). The proof is similar to that
	of Proposition~\ref{SR_square_S11Snn_S1nSn1_combo}, except the $n=2$ case. From Proposition~\ref{SR_rectangle_S11SmnS1nSm1_elements}, we have that $S_{11}$ equals one of the sets $\{E_{11}\}, \{E_{mn}\}, \{E_{m1}\}, \{E_{1n}\}$ and similarly, $S_{mn}$ is one of $\{E_{11}\}, \{E_{mn}\}, \{E_{m1}\}, \{E_{1n}\}$. Since $L$ is a  monomial matrix, out of sixteen possible combinations of $S_{11}$ and $S_{mn}$, the four cases wherein $S_{11}=S_{mn}$ are discarded straightaway. We next show that the following eight other combinations of $S_{11}$ and $S_{mn}$ are also not possible:
	\begin{align*}
		&S_{11}=\{E_{m1}\} ~\text{and}~ S_{mn}=\{E_{mn}\}, \quad 
		S_{11}=\{E_{11}\} ~\text{and}~ S_{mn}=\{E_{1n}\}, \\
		&S_{11}=\{E_{mn}\} ~\text{and}~ S_{mn}=\{E_{m1}\}, \quad 
		S_{11}=\{E_{mn}\} ~\text{and}~ S_{mn}=\{E_{1n}\}, \\
		&S_{11}=\{E_{m1}\} ~\text{and}~ S_{mn}=\{E_{11}\}, \quad
		S_{11}=\{E_{11}\} ~\text{and}~ S_{mn}=\{E_{m1}\}, \\
		&S_{11}=\{E_{1n}\}~\text{and}~ S_{mn}=\{E_{11}\}, \quad
		S_{11}=\{E_{1n}\}~\text{and}~ S_{mn}=\{E_{mn}\}.
	\end{align*}
	Suppose $S_{11}=\{E_{m1}\}$ and $S_{mn}=\{E_{mn}\}$. Let $n=2$ and for $c>0$, define
		\[J(c):= \begin{pmatrix}
			1+c \qquad c \\
			J_{(m-1) \times 2}
		\end{pmatrix}\in \mathcal{SR}\implies \mathcal{L}(J(c))\in \mathcal{SR}.\]
	By Propositions~\ref{SR_square_monomial}~and~\ref{SR_rectangle_S11SmnS1nSm1_elements}, $\mathcal{L}(J(c))$ can be either of the following:
	\begin{align*}
		\begin{pmatrix}
			y_{11} & cy_{12} \\
			y_{21} & y_{22} \\
			\vdots & \vdots \\
			(1+c)y_{m1} & y_{m2}
		\end{pmatrix}~\text{or}~\begin{pmatrix}
			cy_{11} & y_{12} \\
			y_{21} & y_{22} \\
			\vdots & \vdots \\
			(1+c)y_{m1} & y_{m2}
		\end{pmatrix}.
	\end{align*}
	For the first case, using the fact that $\mathcal{L}(J_{m\times2})$ has rank 1 by Proposition~\ref{Proposition_L(J)_rank_1}, the following two $2\times2$ minors of $Y(c)$ have opposite signs for an appropriate choice of $c$:
		\[\det\begin{pmatrix}
			y_{11} & cy_{12} \\
			y_{21} & y_{22} 
		\end{pmatrix}>0~\text{and}~
		\det\begin{pmatrix}
			y_{21} & y_{22} \\
			(1+c)y_{m1} & y_{m2} 
		\end{pmatrix}<0.\]
	For the other case, we can choose $c$ large enough such that we obtain two minors of size $2\times2$ of $Y(c)$ having opposite signs. Hence, we conclude that $S_{11}\neq\{E_{m1}\}$ and $S_{mn}\neq\{E_{mn}\}$ for $n=2$. Now the proof for $m>n\geq3$ follows similarly to the proof of Proposition~\ref{SR_square_S11Snn_S1nSn1_combo}.
	
	Using a similar argument as in the above proof shows that (ii) holds.
	\end{proof}

	\begin{rem}
		In Proposition~\ref{SR_rectangle_S11Smn_S1nSm1_combo}(i), we can assume without loss of generality that
		\begin{equation}\label{SR_rectangle_S11Smn_wlog}
			S_{11}=\{E_{11}\}~\text{and}~S_{mn}=\{E_{mn}\}
		\end{equation}
		since $\mathcal{L}\in P(\mathcal{SR})$ if and only if $\mathcal{L}$ composed with the maps $A\mapsto AP_n$ and $A\mapsto P_mA$ is a linear $\mathcal{SR}$-preserver. Now, using Proposition~\ref{SR_square_monomial} and equation~\eqref{SR_rectangle_S11Smn_wlog} in  Proposition~\ref{SR_rectangle_S11Smn_S1nSm1_combo}(ii), we have either
		\begin{align*}
			&S_{11}=\{E_{11}\},~ S_{mn}=\{E_{mn}\},~ S_{1n}=\{E_{1n}\}, ~\text{and}~ S_{m1}=\{E_{m1}\},~\text{or} \\
			&S_{11}=\{E_{11}\},~ S_{mn}=\{E_{mn}\},~ S_{1n}=\{E_{m1}\}, ~\text{and}~ S_{m1}=\{E_{1n}\}.
		\end{align*}
	\end{rem}
	Next, we show that the conditions $S_{11}=\{E_{11}\}$, $S_{mn}=\{E_{mn}\}$, $S_{1n}=\{E_{m1}\}$, $S_{m1}=\{E_{1n}\}$ do not simultaneously hold. Indeed, say they all hold; let $J(c)$ be the matrix obtained by multiplying the first row of $J=J_{m\times n}$ by $c>0$. Then $J(c)\in \mathcal{SR}_2$ and so $\mathcal{L}(J(c)):=Y(c)\in \mathcal{SR}_2$. Since $m>n$, hence even if $\mathcal{L}$ maps entire elements of the first row of $J(c)$ to the first column of $Y(c)$, there exists $1<j<m$ such that the $(j,1)$ position of $Y(c)$ is not occupied by the image of any element from the first row of $J(c)$. Consider the following $2 \times 2$ minors of $Y(c)$:
	\[ \det Y(c)\begin{pmatrix}1,j\\1,n\end{pmatrix} =
	\det\begin{pmatrix}
		cy_{11} & y_{1n} \\
		y_{j1} & \alpha y_{jn}
	\end{pmatrix}, \qquad
	\det Y(c)\begin{pmatrix}j,m\\1,n\end{pmatrix}
	= \det\begin{pmatrix}
		y_{j1} & \alpha y_{jn} \\
		cy_{m1} & y_{mn}
	\end{pmatrix},\]
	where either $\alpha=c$ or $\alpha=1$. It is always possible to choose $c$ large enough such that the above two minors of $Y(c)$ are of opposite signs, which is a contradiction. Thus, we have
	\begin{equation}\label{SR_rectangle_S11SmnS1nSm1_wlog}
		S_{11}=\{E_{11}\},~ S_{mn}=\{E_{mn}\},~ S_{1n}=\{E_{1n}\}, ~\text{and}~ S_{m1}=\{E_{m1}\}. 
	\end{equation}
	
	\begin{prop}\label{SR_rectangle_FirstLast_rowTorow_colTOcol}
		Let $\mathcal{L}\in P(\mathcal{SR}_2)$ such that \eqref{SR_rectangle_S11SmnS1nSm1_wlog} holds.
		\begin{itemize}
			\item[(i)] Then $\mathcal{L}$ must map the first (and last) row and column of its arguments entirely to the first (and last) row and column, respectively.
			
			\item[(ii)] Moreover, $\mathcal{L}$ must map each row and column of its arguments entirely to some row and column, respectively.
		\end{itemize}
	\end{prop}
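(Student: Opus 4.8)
The plan is to mirror the square-case argument of Proposition \ref{SR_square_firstLast_rowTOrow_colTOcol}, rewriting each $2\times 2$ minor for the $m\times n$ shape (so that the anchoring rows become $1$ and $m$ rather than $1$ and $n$), and to isolate the one genuinely new feature: when $n=2$ the first and last rows consist only of corner positions, so the row half of (i) becomes vacuous and the whole argument must be carried in the column direction. Throughout I will use that $L$ is monomial (Proposition \ref{SR_square_monomial}), so $\mathcal{L}$ induces a permutation $\sigma$ of the $mn$ positions with all image entries strictly positive (Remark \ref{Ato-Awlog}), and that by \eqref{SR_rectangle_S11SmnS1nSm1_wlog} the four corners are fixed; note also that $m>n\geq 2$ forces $m\geq 3$.

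For (i) I would treat the columns first, since this is exactly where $m\geq 3$ is used. Multiply the first column of $J=J_{m\times n}$ by $c>0$, so that $J(c)\in\mathcal{SR}_2$ and $Y(c):=\mathcal{L}(J(c))\in\mathcal{SR}_2$ has positive entries. If the first column were not mapped into the first column then, both corners $(1,1)$ and $(m,1)$ being first-column images, some interior position $(k,1)$ with $1<k<m$ would be the image of an element outside the first column and hence unscaled, while $(1,1),(m,1)$ are scaled. I would then compare
\[\det Y(c)\begin{pmatrix}1,k\\1,n\end{pmatrix}, \qquad \det Y(c)\begin{pmatrix}k,m\\1,n\end{pmatrix},\]
whose dominant terms as $c\to\infty$ are a positive multiple of $c\,y_{11}y_{kn}$ and a negative multiple of $c\,y_{kn}y_{m1}$ (the exact power of $c$ depending only on whether $y_{kn}$ is itself scaled); these have opposite signs for large $c$, contradicting $Y(c)\in\mathcal{SR}_2$. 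The last column is identical. For the rows, if $n\geq 3$ the argument is verbatim Proposition \ref{SR_square_firstLast_rowTOrow_colTOcol}(i) with $m$ in the role of $n$, while if $n=2$ the first and last rows are exactly the fixed corner positions, so there is nothing to prove. This yields (i).

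For (ii) I would fix an interior row $1<k<m$ and scale it by $c$. By (i) the first column maps into itself, so $(k,1)\mapsto(s,1)$ with $1<s<m$ (it cannot be a corner, as $k\neq 1,m$). If the $k$th row did not collapse onto row $s$, some $(s,j)$ with $1<j\leq n$ would fail to be a $k$th-row image, hence be unscaled, while $(s,1)$ is scaled and—again by (i)—the entire first and last rows are unscaled. Then
\[\det Y(c)\begin{pmatrix}1,s\\1,j\end{pmatrix}=y_{11}y_{sj}-c\,y_{1j}y_{s1}<0, \qquad \det Y(c)\begin{pmatrix}s,m\\1,j\end{pmatrix}=c\,y_{s1}y_{mj}-y_{sj}y_{m1}>0\]
for large $c$, a contradiction, so every row maps onto some row. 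The columns are symmetric: scale an interior column $1<k<n$ and use the first-row part of (i) to locate its image; this step is simply vacuous when $n=2$, since there is then no interior column.

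The main obstacle I anticipate is precisely the degeneration at $n=2$: the row half of (i) has no interior column index to exploit, so the full weight of the argument shifts to the column direction, which works only because the shape constraint $m>n$ guarantees $m\geq 3$ interior rows in each column (this is the same phenomenon that forced a separate $n=2$ analysis in Propositions \ref{SR_rectangle_S11SmnS1nSm1_elements} and \ref{SR_rectangle_S11Smn_S1nSm1_combo}). A secondary bookkeeping nuisance, already present in the square case, is that the off-diagonal entry of each anchoring minor in (i) (such as $y_{kn}$) may or may not be scaled by $c$; I would simply note that this ambiguity changes only the power of $c$ in the dominant term, never its sign, so the two competing minors reliably acquire opposite signs for $c$ large.
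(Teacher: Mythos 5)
Your proposal is correct and follows essentially the same route as the paper's proof: scaling a row or column of $J$ by $c$, anchoring $2\times 2$ minors against the fixed corners \eqref{SR_rectangle_S11SmnS1nSm1_wlog} and the already-established parts of (i), and absorbing the scaled-or-unscaled ambiguity into an $\alpha\in\{c,1\}$ that affects only the power of $c$, not the sign of the dominant term. The only difference is cosmetic ordering — you settle columns before rows in (i), whereas the paper does rows first (trivially for $n=2$) and then columns — and your explicit identification of $n=2$ as the degenerate case matches the paper's treatment exactly.
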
	
	\begin{proof}
		We begin by showing (i). We first show that $\mathcal{L}$ must map the first (and last) row of its arguments entirely to the first (and last) row. For $n=2$, this follows directly from~\eqref{SR_rectangle_S11SmnS1nSm1_wlog}. For $m>n\geq3$, the proof is similar to that of Proposition~\ref{SR_square_firstLast_rowTOrow_colTOcol}.
		
		Our next aim is to prove that $\mathcal{L}$ maps the entire first column of its arguments to the first column. Let $J(c)$ be the matrix obtained by multiplying the first column of $J=J_{m\times n}$ by $c>0$. Assume to the contrary that there exists $k$ where $1<k<m$ such that the $(k,1)$ position of the matrix $Y(c)$ is not occupied by the image of any element from the first column of $J(c)$. Since $\mathcal{L}$ maps the entire first and last row of its arguments to the first and the last row, respectively, the following two $2 \times 2$ minors of $Y(c)$ give us a contradiction, with either $\alpha = c$ or $\alpha = 1$:
		\[ \det Y(c)\begin{pmatrix}1,k\\1,n\end{pmatrix}=
		\det\begin{pmatrix}
			cy_{11} & y_{1n} \\
			y_{k1} & \alpha y_{kn}
		\end{pmatrix}, \qquad
		\det Y(c)\begin{pmatrix}k,m\\1,n\end{pmatrix}
		= \det\begin{pmatrix}
			y_{k1} & \alpha y_{kn} \\
			cy_{m1} & y_{mn}
		\end{pmatrix}.\]
		We can prove similarly the corresponding assertion for the last column as well. This shows (i). We now show (ii). The proof is similar to Proposition~\ref{SR_square_firstLast_rowTOrow_colTOcol}(ii), except for the case when $n=2$. By Proposition~\ref{SR_rectangle_FirstLast_rowTorow_colTOcol}(i), the assertion holds trivially for columns. Next we claim that $\mathcal{L}$ maps each row of its argument to some row. Let $J(c)$ be the $m\times2$ matrix obtained by multiplying the $k$th row of $J=J_{m\times 2}$ by $c>0$ where $1<k<m$. By Proposition~\ref{SR_rectangle_FirstLast_rowTorow_colTOcol}(i), $\mathcal{L}$ maps the entire first (and last) column of its arguments to the first (and last) column and hence the $(k,1)$ element of $J(c)$ will be mapped to the first column; let us say $\mathcal{L}$ maps it to the $(p,1)$ position of $Y(c)$. To the contrary assume that $\mathcal{L}$ does not map the entire $k$th row of its argument to some row, then the $(p,2)$ position of $Y(c)$ is not occupied by the image of the element of $J(c)$ present in the $(k,2)$ position. Using Proposition~\ref{SR_rectangle_FirstLast_rowTorow_colTOcol}(i), we obtain the following $2 \times 2$ minors of $Y(c)$ having opposite signs for an appropriate choice of $c$, which completes the proof:
		\[ \det Y(c)\begin{pmatrix}1,p\\1,2\end{pmatrix} =
		\det\begin{pmatrix} y_{11} & y_{12} \\ cy_{p1} & y_{p2}
		\end{pmatrix} \ < \ 0 \ < \
		\det Y(c)\begin{pmatrix}p,m\\1,2\end{pmatrix} =
		\det\begin{pmatrix} cy_{p1} & y_{p2} \\ y_{m1} & y_{m2}
		\end{pmatrix}. \qedhere
		\]
	\end{proof}
	So far, we have used the transformations $A\mapsto-A$, $A\mapsto P_mA$, and $A\mapsto AP_n$ to assume that $\mathcal{L}\in P(\mathcal{SR}_2)$ has the following properties:
	\begin{itemize}
		\item[(i)] $\epsilon_1(\mathcal{L}(E_{ij}))=1$ for all $1\leq i\leq m$, $1\leq j\leq n$.
		
		\item[(ii)] $S_{11}=\{E_{11}\},~ S_{mn}=\{E_{mn}\},~ S_{1n}=\{E_{1n}\}, ~\text{and}~ S_{m1}=\{E_{m1}\}$.
		
		\item[(iii)]  $\mathcal{L}$ maps the entire first (and last) row and column of its arguments to the first (and last) row and column, respectively.
		
		\item[(iv)] $\mathcal{L}$ maps every other row and column to some row and column, respectively.
	\end{itemize} 
	With the above analysis in hand, we complete the proof by induction on the sizes of the matrices. We first show by induction on $m>2$ that the result holds for all $m\times2$ matrices. For the base case, let $m=3$ and $\mathcal{L}:\mathbb{R}^{3\times2}\to\mathbb{R}^{3\times2}$ be a linear map such that $\mathcal{L}(\mathcal{SR}_2)=\mathcal{SR}_2$. 
	
	Let $\mathfrak{B}=(E_{11}, E_{22}, E_{12}, E_{21}, E_{31}, E_{32})$ be an ordered basis of $\mathbb{R}^{3\times 2}$. We have $S_{ij}=\{E_{ij}\}$ for all $1\leq i\leq 3, ~ 1\leq j\leq 2$ by Proposition~\ref{SR_rectangle_FirstLast_rowTorow_colTOcol}. Thus, $\mathcal{L}(E_{11})= l_{1}E_{11},\ldots,\mathcal{L}(E_{32})= l_{6}E_{32}\in \mathcal{SR}_2$, where $l_{i}>0$ for all $1\leq i\leq 6$ by Remark~\ref{Ato-Awlog} and Proposition~\ref{SR_square_monomial}. 
	From Proposition~\ref{Proposition_L(J)_rank_1},
	$\mathcal{L}(J_{3\times2})=\begin{pmatrix}
		l_{1} & l_{3}  \\
		l_{4} & l_{2} \\
		l_{5} & l_{6}  
	\end{pmatrix}$ has rank 1. Let $B=\begin{pmatrix}
		b_{11} & b_{12} \\
		b_{21} & b_{22} \\
		b_{31} & b_{32}
	\end{pmatrix}\in \mathcal{SR}_2.$ Then $\mathcal{L}(B)=\begin{pmatrix}
		l_{1}b_{11} & l_{3}b_{12} \\
		l_{4}b_{21} & l_{2}b_{22} \\
		l_{5}b_{31} & l_{6}b_{32}
	\end{pmatrix}\in \mathcal{SR}_2.$ Since $\mathcal{L}(J)$ has rank 1,
	\[\mathcal{L}(B)=\begin{pmatrix}
		l_{1} & 0 & 0 \\
		0 & l_{4} & 0 \\
		0 & 0 & l_{5}
	\end{pmatrix}\begin{pmatrix}
		b_{11} & b_{12} \\
		b_{21} & b_{22} \\
		b_{31} & b_{32}
	\end{pmatrix}\begin{pmatrix}
		1 & 0 \\
		0 & l_{3}/l_{1}
	\end{pmatrix},\] 
	which is a positive diagonal equivalence. This completes the base case. \smallskip
	
	Strategy used for applying induction: For the induction step, we first prove it for the vector space of $m\times2$ matrices assuming it to be true for that of $(m-1)\times2$ matrices, where $m>3$. After this, we again apply induction on $m\times n$ by assuming it holds for the space of $m\times (n-1)$ matrices for fixed $m$, where $m>n$. 
	
	Let $A\in\mathcal{SR}_2$ have size $m\times2$. By Proposition~\ref{SR_rectangle_FirstLast_rowTorow_colTOcol}(i), the submatrix of $A$ formed by the first $(m-1)$ rows and both columns must be transformed to the first $(m-1)$ rows of $\mathcal{L}(A)$. Since every SR$_2$ matrix $\widehat{A}$ of size $(m-1)\times 2$ is a submatrix of the SR$_2$ matrix $(\widehat{A}^T|\mathbf{0})^T\in\mathbb{R}^{m\times 2}$, the natural restriction of $\mathcal{L}$ onto the $(m-1)\times 2$ leading submatrix is a linear $\mathcal{SR}_2$-preserver on $\mathbb{R}^{(m-1)\times 2}$. By the induction hypothesis, the restriction of $\mathcal{L}$ is a composition of one or more of the following maps: (i)~$X\mapsto -X$, (ii)~$X\mapsto XP_2$, (iii)~$X\mapsto P_{m-1}X$, and (iv)~$X\mapsto FXE$. In fact, it is a positive diagonal equivalence since the first row and column are transformed to the first row and column, respectively, with $\epsilon_1(\mathcal{L}(E_{ij}))=1$ for all $i,j$. Thus $S_{ij}=\{E_{ij}\}$ for all $1\leq i\leq m-1$ and $1\leq j\leq 2$.
	By Proposition~\ref{SR_rectangle_FirstLast_rowTorow_colTOcol}(i), $\mathcal{L}$ maps the first (and last) column of its arguments entirely to the first (and last) column and hence
	\begin{equation}\label{SR_rectangle_Sij=Eij}
		S_{ij}=\{E_{ij}\} ~\text{for all}~ 1\leq i\leq m,~ 1\leq j\leq 2.
	\end{equation} 
	Since we may compose the inverse positive diagonal equivalence relative to the upper submatrix of size $(m-1)\times 2$ with $\mathcal{L}$, we may assume without loss of generality that all but the last row of $A$ and $\mathcal{L}(A)$ are equal. Using~\eqref{SR_rectangle_Sij=Eij}, we have
	$\mathcal{L}(E_{mi})=k_iE_{mi}~\text{for}~ 1\leq i\leq 2,$ for some positive scalars $k_1$ and $k_2$. We next claim that $k_1=k_{2}.$ By Proposition~\ref{Proposition_L(J)_rank_1},
	$\mathcal{L}(J)=\begin{pmatrix} J_{(m-1) \times 2} \\ k_1 \quad \; k_2 \end{pmatrix}\in \mathcal{SR}_2$ has rank one, and  this implies $k_1=k_{2}$. Therefore $\mathcal{L}$ is a positive diagonal equivalence. This completes the induction step for $m$. 
	
	Next, fix arbitrary $m$ and suppose the assertion holds for linear $\mathcal{SR}_2$-preservers from $\mathbb{R}^{m\times (n-1)}$ to $\mathbb{R}^{m\times (n-1)}$, where $m>n$. Let $\mathcal{L}:\mathbb{R}^{m\times n}\to\mathbb{R}^{m\times n}$ such that $\mathcal{L}\in P(\mathcal{SR}_2)$. For $A\in \mathcal{SR}_2$ of size $m\times n$, the submatrix of $A$ formed by all rows and the first $(n-1)$ columns must be transformed to the first $(n-1)$ columns of $\mathcal{L}(A)$ because of Proposition~\ref{SR_rectangle_FirstLast_rowTorow_colTOcol}(i). Since every SR$_2$ matrix $\widehat{A}$ of size $m\times(n-1)$ is a submatrix of the SR$_2$ matrix $(\widehat{A}|\mathbf{0})\in\mathbb{R}^{m\times n}$, the natural restriction of $\mathcal{L}$ onto the $m\times(n-1)$ left submatrix is a linear $\mathcal{SR}_2$-preserver on $\mathbb{R}^{m\times(n-1)}$. By the induction hypothesis, it is a composition of one or more of the following maps: (i)~$X\mapsto -X$, (ii)~$X\mapsto XP_{n-1}$, (iii)~$X\mapsto P_mX$, and (iv)~$X\mapsto FXE$. By the same arguments in the preceding part, we have
	\[S_{ij}=\{E_{ij}\} ~\text{for all}~ 1\leq i\leq m,~ 1\leq j\leq n-1.\] 
	By Proposition~\ref{SR_rectangle_FirstLast_rowTorow_colTOcol}, $\mathcal{L}$ maps each row of its arguments entirely to some row and hence
	\begin{equation}\label{SR_rectangle_second_Sij=Eij}
		S_{ij}=\{E_{ij}\} ~\text{for all}~ 1\leq i\leq m,~ 1\leq j\leq n.
	\end{equation} 
	Since we may compose the inverse positive diagonal equivalence relative to the left submatrix of size $m\times(n-1)$ with $\mathcal{L}$, we may assume without loss of generality that all but the last column of $A$ and $\mathcal{L}(A)$ are equal. Using~\eqref{SR_rectangle_second_Sij=Eij}, we have
	\[\mathcal{L}(E_{in})=c_iE_{in}~\text{for}~ 1\leq i\leq m, ~\text{for some positive scalar} ~c_i.\]
	Now, to complete the proof, we must show that $c_1=\cdots=c_{m}$. Since the rank one matrix $J = J_{m \times n} \in \mathcal{SR}$, so $\mathcal{L}(J)\in \mathcal{SR}$. By Proposition~\ref{Proposition_L(J)_rank_1}, $\mathcal{L}(J_{m\times n})$ has rank one. Hence, all $2\times2$ minors of $\mathcal{L}(J)$ are zero, and so $c_1=\cdots=c_{m}$. Thus, $\mathcal{L}$ maps $A$ to $FAE$ for some positive diagonal matrices $F$ and $E$. This concludes the induction step.
\end{proof}
\begin{proof}[Proof of Theorem~\ref{D}~(1)$\implies$(3)]
We show this simultaneously for both cases: when $m=n$ and $m\neq n$. In the proof of (2)$\implies$(3), note that all the test matrices $A$ considered were SR, and we only dealt with the $1\times1$ and $2\times2$ minors of the output matrices $\mathcal{L}(A)$. Therefore, even if (1) holds instead of (2), the same steps listed above yield the preservers listed in (3).
\end{proof}

\begin{rem}\label{SSR_proof}
	Theorem~\ref{D} holds also for the class of SSR matrices. Since the five linear transformations specified in Theorem~\ref{D} map $\mathcal{SSR}$ onto $\mathcal{SSR}$, so $P(\mathcal{SR})\subseteq P(\mathcal{SSR})$. By Theorem~\ref{G-K}, $\mathcal{SR}=\overline{\mathcal{SSR}}$, and by Lemma~\ref{lemma_preserver}, it follows that $P(\mathcal{SSR})\subseteq P(\mathcal{SR})$. Therefore, $P(\mathcal{SSR})=P(\mathcal{SR})$.
\end{rem}

To complement Theorem~\ref{D}, we now classify all linear $\mathcal{SR}$-preservers on $\mathbb{R}^{2\times2}$.

\begin{proof}[Proof of Theorem~\ref{thrmSR_1}]
	The implications (1)$\implies$(2) and (3)$\implies$(1) are trivial. It only remains to show (2)$\implies$(3). Let $\mathcal{L}:\mathbb{R}^{2\times2}\to\mathbb{R}^{2\times2}$ be a linear map such that $\mathcal{L}(\mathcal{SR}_1)=\mathcal{SR}_1$ and let $\mathfrak{B}=(E_{11},E_{22},E_{12},E_{21})$ be an ordered basis of $\mathbb{R}^{2\times2}$. Since $E_{ij}\in\mathcal{SR}_1$ for all $1\leq i,j\leq2$ and $\mathcal{L}$ maps $\mathcal{SR}_1$ onto itself, therefore $\mathcal{L}^{-1}$ exists, and further $\mathcal{L}^{-1}\in P(\mathcal{SR}_1)$. Notice that the proof of Proposition~\ref{SR_square_image_same_epsilon_1_sign} holds for $\mathcal{L}\in P(\mathcal{SR}_1)$. Thus $\epsilon_1(\mathcal{L}(E_{ij}))$ has the same sign for all $i,j$. We can assume without loss of generality that $\epsilon_1(\mathcal{L}(E_{ij}))=1$ for all $i,j$ since $\mathcal{L}$ is a linear $\mathcal{SR}_1$-preserver if and only if $\mathcal{L}$ composed with the map $A\mapsto-A$ is a linear $\mathcal{SR}_1$-preserver. By Proposition~\ref{SR_square_monomial} and Remark~\ref{Sij_singleton}, $S_{ij}$ is a non-empty singleton set for all $1\leq i,j\leq 2$ and $S_{ij}\cap S_{kl}=\emptyset$ for $(i,j)\neq(k,l)$.  
	%
	
	Thus the element in each of the sets $S_{11},$ $S_{12},$ $S_{21}$, and $S_{22}$ must be one of the following:
	\[E_{11},~E_{12},~E_{21},~\text{and}~ E_{22}.\]
	Thus, there are 24 possible combinations of the sets $S_{11},$ $S_{12},$ $S_{21}$, and $S_{22}$ -- listed in Table~\ref{Ttable}.
	\begin{table}[H]
		\begin{tabular}{ ||c|c|c|c|c|c||} 
			\hline\hline
			S.No. & $S_{11}$ & $S_{12}$ & $S_{21}$ & $S_{22}$ & $\mathcal{SR}_1$-preservers \\ 
			\hline\hline
			I. & $E_{11}$  &  $E_{12}$  &  $E_{21}$  &  $E_{22}$  & - \\
			& $E_{11}$  &  $E_{21}$  &  $E_{12}$  &  $E_{22}$  & $A\mapsto A^T$ \\
			& $E_{12}$  &  $E_{11}$  &  $E_{22}$  &  $E_{21}$  & $A\mapsto AP_2$  \\
			& $E_{12}$  &  $E_{22}$  &  $E_{11}$  &  $E_{21}$  & $A\mapsto AP_2\mapsto (AP_2)^T$ \\
			& $E_{21}$  &  $E_{11}$  &  $E_{22}$  &  $E_{12}$  & $A\mapsto P_2A\mapsto (P_2A)^T$ \\
			& $E_{21}$  &  $E_{22}$  &  $E_{11}$  &  $E_{12}$  & $A\mapsto P_2A$ \\
			& $E_{22}$  &  $E_{21}$  &  $E_{12}$  &  $E_{11}$  & $A\mapsto P_2AP_2$ \\
			& $E_{22}$  &  $E_{12}$  &  $E_{21}$  &  $E_{11}$  & $A\mapsto P_2AP_2\mapsto (P_2AP_2)^T$ \\
			\hline 
			II. & $E_{11}$  &  $E_{12}$  &  $E_{22}$  &  $E_{21}$  & - \\
			& $E_{11}$  &  $E_{21}$  &  $E_{22}$  &  $E_{12}$  & $A\mapsto A^T$ \\
			& $E_{12}$  &  $E_{11}$  &  $E_{21}$  &  $E_{22}$  & $A\mapsto AP_2$ \\
			& $E_{12}$  &  $E_{22}$  &  $E_{21}$  &  $E_{11}$  & $A\mapsto AP_2\mapsto (AP_2)^T$ \\
			& $E_{21}$  &  $E_{11}$  &  $E_{12}$  &  $E_{22}$  & $A\mapsto P_2A\mapsto (P_2A)^T$ \\
			& $E_{21}$  &  $E_{22}$  &  $E_{12}$  &  $E_{11}$  & $A\mapsto P_2A$ \\
			& $E_{22}$  &  $E_{21}$  &  $E_{11}$  &  $E_{12}$  & $A\mapsto P_2AP_2$  \\
			& $E_{22}$  &  $E_{12}$  &  $E_{11}$  &  $E_{21}$  & $A\mapsto P_2AP_2\mapsto (P_2AP_2)^T$  \\
			\hline
			III. & $E_{11}$  &  $E_{22}$  &  $E_{12}$  &  $E_{21}$  & - \\
			& $E_{11}$  &  $E_{22}$  &  $E_{21}$  &  $E_{12}$  & $A\mapsto A^T$  \\
			& $E_{12}$  &  $E_{21}$  &  $E_{11}$  &  $E_{22}$  & $A\mapsto AP_2$  \\
			& $E_{12}$  &  $E_{21}$  &  $E_{22}$  &  $E_{11}$  & $A\mapsto AP_2\mapsto (AP_2)^T$ \\
			& $E_{21}$  &  $E_{12}$  &  $E_{11}$  &  $E_{22}$  & $A\mapsto P_2A\mapsto (P_2A)^T$  \\
			& $E_{21}$  &  $E_{12}$  &  $E_{22}$  &  $E_{11}$  & $A\mapsto P_2A$  \\
			& $E_{22}$  &  $E_{11}$  &  $E_{21}$  &  $E_{12}$  & $A\mapsto P_2AP_2$ \\
			& $E_{22}$  &  $E_{11}$  &  $E_{12}$  &  $E_{21}$  & $A\mapsto P_2AP_2\mapsto (P_2AP_2)^T$  \\
			\hline\hline
		\end{tabular}
		
	\caption{Table listing the 24 cases in the proof of	Theorem~\ref{thrmSR_1}.}\label{Ttable}
	\end{table}
	In the last column of this table, we have listed the $\mathcal{SR}_1$-preservers that we have used to assume without loss of generality:
	\begin{align}
		\mathrm{I.} & ~ S_{11}=\{E_{11}\}, ~ S_{12}=\{E_{12}\}, ~ S_{21}=\{E_{21}\}, ~ \text{and} ~ S_{22}=\{E_{22}\};\label{SR_1preserver-1}\\
		\mathrm{II.} & ~ S_{11}=\{E_{11}\}, ~ S_{12}=\{E_{12}\}, ~ S_{21}=\{E_{22}\}, ~ \text{and} ~ S_{22}=\{E_{21}\};\label{SR_1preserver-2}\\
		\mathrm{III.} & ~ S_{11}=\{E_{11}\}, ~ S_{12}=\{E_{22}\}, ~ S_{21}=\{E_{12}\}, ~ \text{and} ~ S_{22}=\{E_{21}\}.\label{SR_1preserver-3}
	\end{align}
	Further, using the map $\mathcal{M}:\begin{pmatrix}
		a_{11} & a_{12} \\
		a_{21} & a_{22}
	\end{pmatrix}\mapsto\begin{pmatrix}
		a_{11} & a_{12} \\
		a_{22} & a_{21}
	\end{pmatrix}$ in~\eqref{SR_1preserver-2} and the composition of the maps $\mathcal{M}$ and $A\mapsto A^T$ in~\eqref{SR_1preserver-3}, we assume~\eqref{SR_1preserver-1} without loss of generality, since $\mathcal{L}\in P(\mathcal{SR}_1)$ if and only if $\mathcal{L}$ composed with the map $\mathcal{M}$ is a linear $\mathcal{SR}_1$-preserver.
	
	Let $\mathcal{L}(E_{11})=l_1E_{11}$, $\mathcal{L}(E_{22})=l_2E_{22}$, $\mathcal{L}(E_{12})=l_3E_{12}$, and $\mathcal{L}(E_{21})=l_4E_{21}$, where $l_i>0$ for all $1\leq i\leq4$ as $\epsilon_1(\mathcal{L}(E_{ij}))=1$ for all $i,j$. Thus for any $2\times2$ matrix $B=\begin{pmatrix}
		b_{11} & b_{12} \\
		b_{21} & b_{22}
	\end{pmatrix}\in\mathcal{SR}_1$, $\mathcal{L}(B)=\begin{pmatrix}
		l_1b_{11} & l_3b_{12} \\
		l_4b_{21} & l_2b_{22}
	\end{pmatrix}=H\circ B$, where $H=\begin{pmatrix}
		l_1 & l_3 \\
		l_4 & l_2
	\end{pmatrix}$. Therefore, the linear $\mathcal{SR}_1$-preservers
	on $\mathbb{R}^{2\times2}$ are compositions of one or more of the
	following maps: (a)~$A\mapsto H\circ A$, where $H\in (0,\infty)^{2\times 2}$, (b)~$A\mapsto-A$, (c)~$A\mapsto P_2A$, (d)~$A\mapsto A^T$, and (e)~$\begin{pmatrix}
		a_{11} & a_{12} \\
		a_{21} & a_{22}
	\end{pmatrix}\mapsto\begin{pmatrix}
		a_{11} & a_{12} \\
		a_{22} & a_{21}
	\end{pmatrix}$.
	
	The result for SSR$_1$ follows similarly to the argument in Remark~\ref{SSR_proof}. This completes the proof.
\end{proof}

As a consequence of Theorem~\ref{thrmSR_1}, we next classify the linear preservers of $2\times2$ SSR matrices.

\begin{theorem}\label{Theorem_SSR2}
	Given a linear map $\mathcal{L}:\mathbb{R}^{2\times2}\to\mathbb{R}^{2\times 2}$, the following are equivalent.
	\begin{itemize}
		\item[(1)] $\mathcal{L}$ maps the class of $2\times 2$ SSR matrices onto itself.
		\item[(2)] $\mathcal{L}$ is a composition of one or more of the following types of transformations:
		\begin{itemize}
			\item[(a)] $A\mapsto FAE$, where $F$ and $E$ are $2\times2$ diagonal matrices with positive diagonal entries;
			\item[(b)] $A\mapsto -A$;
			\item[(c)] $A\mapsto P_2A$, in which $P_2$ is an exchange matrix; and
			\item[(d)] $A\mapsto A^T$.
		\end{itemize}
	\end{itemize}
\end{theorem}
\begin{proof}
	That (2)$\implies$(1) is immediate. We now show (1)$\implies$(2). By Lemma~\ref{lemma_preserver} we have $P(\mathcal{SSR})\subseteq P(\mathcal{SR})$, so any linear preserver of the set of $2\times2$ SSR matrices must be as in Theorem~\ref{thrmSR_1}(3). To avoid ambiguity, we relabel the maps (a)--(e) in Theorem~\ref{thrmSR_1}(3) as (a$^{\prime}$)--(e$^{\prime}$), respectively. We need to show that the linear preservers of the class of $2\times2$ SSR matrices are precisely the compositions of the maps (a$^{\prime}$)--(d$^{\prime}$), with the additional requirement that the matrix $H$ in (a$^{\prime}$) must be singular.
	
    Let $\mathcal{T}:\mathbb{R}^{2\times2}\to\mathbb{R}^{2\times2}$ be an arbitrary composition of the maps (a$^{\prime}$) through (e$^{\prime}$). Note that if $T_a$ is any map (a$^{\prime}$) and $T'$ is any map (b$^{\prime}$)--(e$^{\prime}$), then $T' T_a = T'_a T'$ for some other map $T'_a$ of type (a$^{\prime}$). Thus, all maps $T_a$ can be grouped at the leftmost position in the composition and collectively replaced by a single entrywise positive matrix. Furthermore, the map $A\mapsto-A$ preserves SSR matrices and commutes with all other transformations. Therefore, without loss of generality, we can assume that $\mathcal{T}(A):=H\circ\mathcal{B}(A)$ for all $A\in\mathbb{R}^{2\times2}$, where $H=\begin{pmatrix}
		l_1 & l_3 \\
		l_4 & l_2
	\end{pmatrix}$ is an entrywise positive matrix, and $\mathcal{B}$ is an arbitrary composition of maps selected from (c$^{\prime}$) to (e$^{\prime}$). We next show that $\mathcal{T}$ preserves SSR matrices if and only if $H$ is singular and $\mathcal{B}$ is composed only of maps (c$^{\prime}$) and (d$^{\prime}$).
	
	To prove the converse, suppose $H$ is singular and $\mathcal{B}$ is composed only of maps (c$^{\prime}$) and (d$^{\prime}$). Let $A$ be an SSR matrix. By simple calculations, one can verify that $\mathcal{T}(A)$ is SSR if and only if $A$ is SSR. Since $l_1l_2=l_3l_4$, one can express $\mathcal{T}(A)$ as
	\begin{align*}
		\begin{pmatrix}
			l_1 & 0 \\
			0 & l_4
		\end{pmatrix}\mathcal{B}(A)\begin{pmatrix}
			1 & 0 \\
			0 & l_2/l_4
		\end{pmatrix},
	\end{align*}
	which is a composition of a positive diagonal equivalence and the map $\mathcal{B}$.
	
	Next, we prove the forward implication by the contrapositive method. Suppose $\mathcal{T}$ preserves SSR matrices, but either $H$ is invertible or $\mathcal{B}$ involves the map (e$^{\prime}$). We show that $\mathcal{T}$ fails to preserve SSR in each such case. Note that $\mathcal{B}$ only permutes the entries of $2\times2$ matrices in one of the twenty-four ways listed in the middle column of Table~\ref{Ttable}. If $H$ is invertible, take $X=\begin{pmatrix}
		l_3l_4/l_1l_2 & 1 \\
		1 & 1
	\end{pmatrix}$ and define $A:=\mathcal{B}^{-1}(X)$. One can verify that $A$ is SSR, but $\det\mathcal{T}(A)=0$. Let $H$ be singular. We next claim that if $\mathcal{B}$ involves the map (e$^{\prime}$), then $\mathcal{T}$ fails to preserve the set of SSR matrices. Note that if $\mathcal{B}$ corresponds to a configuration in Table~\ref{Ttable}(I), then it is composed only of maps (c$^{\prime}$) and/or (d$^{\prime}$). Hence, we need to examine the following two cases.
	
	\begin{itemize}[leftmargin=0pt,label={}]
		\item \textbf{Case 1.} $\mathcal{B}$ corresponds to a configuration in Table~\ref{Ttable}(II): $\mathcal{B}$ consists of a map $\mathcal{P}_1$ on the left, which is a composition of maps (c$^{\prime}$) and/or (d$^{\prime}$), followed by a single application of map (e$^{\prime}$). Therefore, $\mathcal{T}(A)=H\circ$($\mathcal{P}_1$(e$^{\prime}$)$(A))$. Taking $A=\begin{pmatrix}
			1 & 2 \\
			2 & 1
		\end{pmatrix}$, which is SSR, yields $\det \mathcal{T}(A)=0$.
		
		\item \textbf{Case 2.} $\mathcal{B}$ corresponds to a configuration in Table~\ref{Ttable}(III): $\mathcal{B}$ consists of $\mathcal{P}_2$ on the left, which is a composition of maps (c$^{\prime}$) and/or (d$^{\prime}$), followed by (e$^{\prime}$), and then by (d$^{\prime}$). Thus, $\mathcal{T}(A) = H\circ$($\mathcal{P}_2$(e$^{\prime}$)(d$^{\prime}$)$(A))$. Using the same $A$ from case~1 gives $\det\mathcal{T}(A)=0$. This completes the proof. \qedhere
	\end{itemize}
\end{proof}

\section{Theorem~\ref{F}: Linear preserver problem for sign regularity with given sign pattern} \label{section SR_epsilon}

In the final section, we show Theorem~\ref{F}, i.e., we characterize all linear maps preserving sign regularity with a given sign pattern. Again, that (3)$\implies$(1) and (3)$\implies$(2) can be verified directly. We first show (2)$\implies$(3), and then end by proving (1)$\implies$(3). To prove (2)$\implies$(3), we must handle the cases $m=n$ and $m\neq n$ separately, since $A\mapsto A^T$ does not preserve $m\times n$ matrices.

Let $\mathcal{L}:\mathbb{R}^{m\times n}\to\mathbb{R}^{m\times n}$ for $m,n\geq2$ be a linear transformation such that $\mathcal{L}$ maps $\mathcal{SR}_2(\epsilon)$ onto itself. If $m\neq n$, then we assume without loss of generality that $m>n$. The case $m<n$ follows similarly. Also, assume without loss of generality that $\epsilon_1>0.$ For $\epsilon_1<0$, one can take the negatives of the  basis elements below and proceed similarly.

Note that $E_{ij}\in \mathcal{SR}_2(\epsilon)$ for all $1\leq i\leq m$, $1\leq j\leq n$; thus $\mathcal{L}^{-1}$ exists and further $\mathcal{L}^{-1}\in P(\mathcal{SR}_2(\epsilon))$. Let $\mathfrak{B}$ be the ordered basis~\eqref{basis} and $L$ be the matrix which represents the linear transformation $\mathcal{L}$ with respect to this basis.
\begin{prop} \label{SR_epsilon_monomial}
	$L$ is a monomial matrix.
\end{prop}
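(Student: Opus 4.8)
The plan is to mimic the strategy behind Proposition \ref{SR_square_monomial}, but the fixed sign pattern makes the argument more direct: here one can read off the nonnegativity of $L$ (and of $L^{-1}$) immediately, without first having to establish an analogue of Proposition \ref{SR_square_image_same_epsilon_1_sign} showing that the images of the basis elements share a common sign of their $1\times 1$ minors.

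First I would record the invertibility that is already in hand. Since $\epsilon_1>0$, each basis matrix $E_{ij}$ has its unique nonzero entry equal to $1$ and all its $2\times 2$ minors vanishing, so $E_{ij}\in\mathcal{SR}_2(\epsilon)$; consequently $\mathrm{span}(\mathcal{SR}_2(\epsilon))=\mathbb{R}^{m\times n}$. By Lemma \ref{lemma_preserver}, $\mathcal{L}$ maps $\mathbb{R}^{m\times n}$ onto itself, so $\mathcal{L}^{-1}$ exists and lies in $P(\mathcal{SR}_2(\epsilon))$, exactly as noted just before the statement.

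Next I would extract $L\geq 0$. Each column of $L$ is the coordinate vector of some $\mathcal{L}(E_{ij})$ with respect to $\mathfrak{B}$. Since $\mathcal{L}(E_{ij})\in\mathcal{SR}_2(\epsilon)$ and $\epsilon_1(\mathcal{L}(E_{ij}))=\epsilon_1>0$, every nonzero entry of $\mathcal{L}(E_{ij})$ is positive; that is, $\mathcal{L}(E_{ij})\geq 0$. Hence every column of $L$ is nonnegative and $L\geq 0$. Applying the identical reasoning to $\mathcal{L}^{-1}\in P(\mathcal{SR}_2(\epsilon))$ gives $L^{-1}\geq 0$. Finally, invoking the characterization recalled in Section \ref{section SR} --- a real square matrix is monomial precisely when both it and its inverse are entrywise nonnegative --- I conclude that $L$ is a monomial matrix.

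There is essentially no obstacle to overcome here; the only point that needs care is that the fixed-pattern hypothesis (as opposed to the all-patterns hypothesis of Section \ref{section SR}) is exactly what lets one bypass the analogue of Proposition \ref{SR_square_image_same_epsilon_1_sign}: because every image $\mathcal{L}(E_{ij})$ must itself be $\mathcal{SR}_2(\epsilon)$, the common sign of its $1\times 1$ minors is forced to equal $\epsilon_1$, rather than merely being constant across the basis and then normalized by composing with $A\mapsto -A$.
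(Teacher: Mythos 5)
Your proposal is correct and matches the paper's argument: the paper's one-line proof of Proposition \ref{SR_epsilon_monomial} is exactly your observation that $\mathcal{L},\mathcal{L}^{-1}\in P(\mathcal{SR}_2(\epsilon))$ with $\epsilon_1>0$ forces $L\geq 0$ and $L^{-1}\geq 0$, whence $L$ is monomial by the characterization recalled in Section \ref{section SR}. Your closing remark --- that the fixed sign pattern makes the common sign of the $1\times 1$ minors automatic, so no analogue of Proposition \ref{SR_square_image_same_epsilon_1_sign} is needed --- is precisely why the paper's proof here is so short.
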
	
\begin{proof}
	Since $\mathcal{L}, \mathcal{L}^{-1}\in P(\mathcal{SR}_2(\epsilon))$, $L$ and $L^{-1}$ are entrywise non-negative.
\end{proof}	
Note that Remark~\ref{Sij_singleton} holds and all the entries of $\mathcal{L}(J)=Y$ are positive.
\begin{prop} \label{SR_epsilon_S11Snn_S1nSn1_combo}
	For $\mathcal{L}\in P(\mathcal{SR}_2(\epsilon))$ with $\epsilon=(\epsilon_1,\epsilon_2)$, the following pairwise combinations are possible.
	\begin{itemize}
		\item[(i)] $S_{11}=\{E_{11}\}~ \text{and} ~ S_{mn}=\{E_{mn}\}, ~ \text{or} ~ S_{11}=\{E_{mn}\} ~ \text{and} ~ S_{mn}=\{E_{11}\}$.
		
		\item[(ii)] $S_{1n}=\{E_{1n}\} ~ \text{and} ~ S_{m1}=\{E_{m1}\}, ~ \text{or} ~ S_{1n}=\{E_{m1}\} ~ \text{and} ~ S_{m1}=\{E_{1n}\}$.
	\end{itemize}
\end{prop}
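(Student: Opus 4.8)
The plan is to obtain both parts at once from a single rescaling argument applied to the four corner entries of $J=J_{m\times n}$. By Proposition \ref{SR_epsilon_monomial}, $L$ is monomial; since $\epsilon_1>0$ we have $E_{ab}\in\mathcal{SR}_2(\epsilon)$, so each $S_{ab}$ is a singleton $\{E_{\sigma(ab)}\}$ (Remark \ref{Sij_singleton}) and $\mathcal{L}(E_{ab})$ is a \emph{positive} multiple of $E_{\sigma(ab)}$. In particular all entries of $Y=\mathcal{L}(J)$ are positive, and if $J(c)$ denotes $J$ with a single chosen entry rescaled by $c$, then $\mathcal{L}(J(c))=Y(c)$ is exactly $Y$ with its entry in the position $\sigma(\cdot)$ rescaled by $c$.

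Before the main step I would record two elementary facts about where a position can sit inside a $2\times2$ submatrix: a position $(p,q)$ occupies the \emph{anti}-diagonal of some $2\times2$ submatrix unless $(p,q)\in\{(1,1),(m,n)\}$, and it occupies the \emph{main} diagonal of some $2\times2$ submatrix unless $(p,q)\in\{(1,n),(m,1)\}$. Both are immediate from $m,n\ge2$.

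For part (i) I rescale the $(1,1)$ entry of $J$. Its only nonzero $2\times2$ minors equal $c-1$, so $J(c)\in\mathcal{SR}_2(\epsilon)$ for $c\ge1$ when $\epsilon_2=+1$ and for $0<c\le1$ when $\epsilon_2=-1$. Suppose $S_{11}=\{E_{pq}\}$ with $(p,q)\notin\{(1,1),(m,n)\}$; placing the rescaled entry on an anti-diagonal produces a $2\times2$ minor of $Y(c)$ of the form $\gamma-c\delta$ with $\gamma,\delta>0$. If $\epsilon_2=+1$ this is $<0$ for large admissible $c$, and if $\epsilon_2=-1$ it is $>0$ as $c\to0^+$; in both cases $Y(c)\notin\mathcal{SR}_2(\epsilon)$, a contradiction. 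Hence $S_{11}\in\{\{E_{11}\},\{E_{mn}\}\}$, and rescaling the $(m,n)$ entry gives the same for $S_{mn}$. Since $L$ is monomial, $S_{11}\ne S_{mn}$, which leaves precisely the two combinations in (i). For part (ii) I instead rescale the $(1,n)$ (and $(m,1)$) entry; now the nonzero $2\times2$ minors of $J(c)$ equal $1-c$, so the admissible range of $c$ is reversed. If $S_{1n}=\{E_{pq}\}$ with $(p,q)\notin\{(1,n),(m,1)\}$, placing the rescaled entry on a main diagonal yields a minor $c\alpha-\beta$ with $\alpha,\beta>0$ that is $<0$ as $c\to0^+$ when $\epsilon_2=+1$ and $>0$ for large $c$ when $\epsilon_2=-1$; again a contradiction. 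Thus $S_{1n},S_{m1}\in\{\{E_{1n}\},\{E_{m1}\}\}$, and monomiality yields the two combinations in (ii).

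The point requiring care --- and the only place where fixing the sign pattern does more work than in Theorem \ref{D} --- is the bookkeeping of the admissible interval of $c$: it is a half-line whose orientation is determined jointly by $\epsilon_2$ and by whether the rescaled corner of $J$ lies on a main or an anti diagonal of $J$'s own minors, and one must pair the correct endpoint (large $c$ versus $c\to0^+$) with the monotonicity of the affected minor of $Y(c)$. This pairing is exactly what eliminates the two ``anti-diagonal'' pairings of corners that remained admissible in Theorem \ref{D}. The only genuinely separate case to check is $n=2$ (no interior columns), where the required anti/main-diagonal placements still exist because $m>2$, mirroring the separate treatment of $n=2$ elsewhere in the section.
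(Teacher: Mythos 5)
Your proof is correct and takes essentially the same route as the paper's: rescale one corner entry of $J$, use monomiality of $L$ to see that exactly one (positive) entry of $Y$ gets rescaled, and contradict the fixed sign $\epsilon_2$ via a $2\times2$ minor containing that entry, with the admissible range of $c$ (large versus $c\to0^+$) dictated by $\epsilon_2$ --- your anti-/main-diagonal placement observation simply packages the paper's corner-by-corner case analysis uniformly. One minor remark: no separate $n=2$ case is needed at all, since your placement facts hold for every $m,n\geq2$ (including $m=n=2$, which Theorem \ref{F} allows), and the paper's proof of this proposition is likewise uniform in the size.
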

\begin{proof}
	Let us begin by proving (i). To the contrary, suppose $S_{11}\neq\{E_{11}\}, \{E_{mn}\}$. Let $J(c)$ be the matrix obtained by multiplying the $(1,1)$ entry of $J=J_{m\times n}$ by $c>0$. Then $J(c)\in \mathcal{SR}_2(\epsilon)$ with $\epsilon_1=\epsilon_2=1$ for $c>1$, and with $\epsilon_1=1$, $\epsilon_2=-1$ for $0<c<1$. Thus, $\mathcal{L}(J(c)):=Y(c)\in \mathcal{SR}_2(\epsilon)$. Now, consider the following cases.
	
	If $S_{11}=\{E_{1k}\}$ for $k\neq 1$, an appropriate choice of $c>0$ gives us $\epsilon_2\det\begin{pmatrix}
		y_{11} & cy_{1k} \\
		y_{m1} & y_{mk}
	\end{pmatrix}<0$, a contradiction. Similarly, we can show that $S_{11}\neq\{E_{k1}\}$ for $k\neq1$, $S_{11}\neq\{E_{mk}\}$ for $k\neq n$, and $S_{11}\neq\{E_{kn}\}$ for $k\neq m$. 
	
	If $S_{11}=\{E_{ij}\}$ where $(i,j)\neq(1,1)$ and $(m,n)$, then an appropriate choice of $c>0$ gives us $\epsilon_2\det\begin{pmatrix}
		y_{i1} & cy_{ij} \\
		y_{m1} & y_{mj}
	\end{pmatrix}<0$, a contradiction. Hence, either $S_{11}=\{E_{11}\}~ \text{or} ~ S_{11}=\{E_{mn}\}$. \smallskip
	
	Again, to the contrary, suppose that $S_{mn}\neq\{E_{11}\}, \{E_{mn}\}$. In this case, let $J(c)$ be the matrix obtained by multiplying the $(m,n)$ entry of $J=J_{m\times n}$ by $c>0$. Proceed similar to the previous case to show that our assumption is not true. Hence, $S_{mn}=\{E_{11}\}$ or
	$S_{mn}=\{E_{mn}\}$. By Proposition~\ref{SR_epsilon_monomial}, for any $\epsilon=(\epsilon_1,\epsilon_2)$, we have either 
	\[S_{11}=\{E_{11}\}~ \text{and} ~ S_{mn}=\{E_{mn}\}, ~ \text{or} ~ S_{11}=\{E_{mn}\} ~ \text{and} ~ S_{mn}=\{E_{11}\}.\] 
	The same argument can now be adapted as in the preceding half of this proof to show~(ii).
\end{proof}

\begin{prop} \label{SR_epsilon_FirstLast_Row_Col}
	Let $\mathcal{L}\in P(\mathcal{SR}_2(\epsilon))$ with $\epsilon=(\epsilon_1,\epsilon_2)$ such that
	\begin{equation}\label{SR_epsilon_S11SnnSn1S1n_wlog}
		S_{11}=\{E_{11}\},~ S_{mn}=\{E_{mn}\}, ~ S_{1n}=\{E_{1n}\}, ~ \text{and} ~ S_{m1}=\{E_{m1}\}.
	\end{equation}
	\begin{itemize}
		\item [(i)] Then $\mathcal{L}$ maps the first (and last) row and column of its arguments entirely to the first (and last) row and column, respectively.
		
		\item [(ii)] $\mathcal{L}$ maps each row and column of its arguments entirely to some row and column, respectively.
	\end{itemize} 
\end{prop}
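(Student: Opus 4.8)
The plan is to follow the templates of Propositions~\ref{SR_square_firstLast_rowTOrow_colTOcol} and~\ref{SR_rectangle_FirstLast_rowTorow_colTOcol}, reading off a violation of the single prescribed sign $\epsilon_2$ instead of a clash between two a priori unknown signs. The organizing remark is that multiplying an \emph{entire} row (or column) of $J=J_{m\times n}$ by a scalar $c>0$ keeps all rows (resp.\ columns) proportional, so the resulting matrix $J(c)$ has rank one and all of its $2\times 2$ minors vanish; as $\epsilon_1>0$ and the entries of $J(c)$ are positive, this forces $J(c)\in\mathcal{SR}_2(\epsilon)$ for the given $\epsilon$, and hence $Y(c):=\mathcal{L}(J(c))\in\mathcal{SR}_2(\epsilon)$, so every nonzero $2\times 2$ minor of $Y(c)$ must carry the sign $\epsilon_2$. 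By Proposition~\ref{SR_epsilon_monomial} and Remark~\ref{Sij_singleton} each $S_{ij}$ is a singleton, so $\mathcal{L}(E_{ij})=l_{ij}E_{\sigma(i,j)}$ with $l_{ij}>0$; in particular all entries of $Y=\mathcal{L}(J)$ are positive. The recurring device is thus to exhibit, for $c$ large, two genuinely nonzero $2\times 2$ minors of $Y(c)$ of \emph{opposite} sign, which already contradicts their common prescribed value $\epsilon_2$.

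For part~(i) I would treat the rows first. When $n\geq 3$ the argument of Proposition~\ref{SR_square_firstLast_rowTOrow_colTOcol}(i) transcribes directly: if the first row is not sent into the first row, then by monomiality there is a column index $1<k<n$ at which the entry $Y(c)_{1k}$ is \emph{unscaled}, whereas the corner entries pinned by~\eqref{SR_epsilon_S11SnnSn1S1n_wlog} are scaled by $c$; the two minors on rows $\{1,m\}$ against columns $\{1,k\}$ and $\{k,n\}$ then have leading terms $+c\alpha\,y_{11}y_{mk}$ and $-c\alpha\,y_{1n}y_{mk}$ with $\alpha\in\{1,c\}$, hence opposite signs for large $c$. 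The last row is symmetric, and the two columns follow either by the mirror argument (when $m=n$) or, for $m>n$, by scaling a column of $J$ and invoking the row behaviour just established, exactly as in Proposition~\ref{SR_rectangle_FirstLast_rowTorow_colTOcol}(i). The delicate case---and the real content of the proposition---is $n=2$: a length-two row consists only of the corner slots $(1,1),(1,n)$ (resp.\ $(m,1),(m,n)$), so~\eqref{SR_epsilon_S11SnnSn1S1n_wlog} already forces the first and last rows onto themselves, after which the columns are pinned by scaling a column of $J_{m\times 2}$ and using the two minors on rows $\{1,k\},\{k,m\}$ against the pair of columns $\{1,2\}$ to produce the opposing signs.

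For part~(ii) I would scale an interior row $k$ with $1<k<m$; by part~(i) its first-column entry is carried into the first column, say to the position $(p,1)$, which is therefore scaled by $c$. If the whole of row $k$ is not carried onto row $p$, then some slot $(p,j)$ of $Y(c)$ is unscaled, and the minors on rows $\{1,p\}$ and $\{p,m\}$ (against columns $\{1,j\}$, or $\{1,2\}$ when $n=2$) again split into opposite signs for large $c$, contradicting their common value $\epsilon_2$; the statement for columns is obtained by the same construction applied to an interior column. The only point requiring genuine attention, rather than routine determinant expansion, is twofold: handling the $n=2$ degeneracy where the generic row/column argument collapses onto the corner data, and checking in each instance that the two exhibited minors are \emph{nonzero} for the chosen $c$---which holds because their dominant terms in $c$ have positive coefficients (all $y_{pq}>0$) and opposite signs---so that membership in $\mathcal{SR}_2(\epsilon)$ is truly violated and not vacuously met.
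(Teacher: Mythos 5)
Your proposal is correct and follows essentially the same route as the paper: scale an entire row or column of $J$ (so $J(c)$ stays rank one and lies in $\mathcal{SR}_2(\epsilon)$ for any $\epsilon_2$), use monomiality to pigeonhole an unscaled slot of $Y(c)$, derive a sign contradiction with the prescribed $\epsilon_2$ from $2\times 2$ minors, note the $n=2$ triviality for rows via \eqref{SR_epsilon_S11SnnSn1S1n_wlog}, and bootstrap part (ii) from part (i) exactly as the paper does. The only (harmless) difference is tactical: where the paper defeats the fixed sign $\epsilon_2$ with a \emph{single} minor such as $c\alpha\,y_{11}y_{mp}-y_{1p}y_{m1}$, choosing $c$ large or small according to $\epsilon_2$, you exhibit \emph{two} nonzero minors of opposite sign at large $c$ (importing the device of Propositions \ref{SR_square_firstLast_rowTOrow_colTOcol} and \ref{SR_rectangle_FirstLast_rowTorow_colTOcol}), which is equally valid since both would have to carry the common sign $\epsilon_2$.
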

\begin{proof}
	First, we show that $\mathcal{L}$ must map the first row of its arguments entirely to the first row. This holds trivially for $n=2$ by~\eqref{SR_epsilon_S11SnnSn1S1n_wlog}. Therefore, let $n\geq3$. Let $J(c)$ be the matrix obtained by multiplying the first row of $J=J_{m\times n}$ by $c>0$. Then \[J(c)\in \mathcal{SR}_2(\epsilon)~ \text{with}~ \epsilon_1=1\quad \implies \quad \mathcal{L}(J(c)):=Y(c)\in \mathcal{SR}_2(\epsilon)~\text{with}~ \epsilon_1=1. \] 
	Assume $\mathcal{L}$ does not map the first row of its arguments entirely to the first row. Thus, there exists $1<p<n$ such that the $(1,p)$ position of the matrix $Y(c)$ is not occupied by the image of any element from the first row of $J(c)$. Using~\eqref{SR_epsilon_S11SnnSn1S1n_wlog} and an appropriate choice of $c>0$ gives
	\[\epsilon_2\det\begin{pmatrix}
		cy_{11} & y_{1p} \\
		y_{m1} & \alpha y_{mp}
	\end{pmatrix}=\epsilon_2(c\alpha y_{11}y_{mp}-y_{1p}y_{m1})<0,\]
	where either $\alpha=c$ or $\alpha=1$, a contradiction. Similarly, we can prove the other cases by multiplying that particular row and column of $J=J_{m\times n}$ by $c>0$. This shows part (i).
	
	To prove part (ii), we will first show that $\mathcal{L}$ must map the $k$th row of its arguments entirely to some row, where $1<k<m$. Let $J(c)$ be the matrix obtained by multiplying the $k$th row of $J=J_{m\times n}$ by $c> 0$. By Proposition~\ref{SR_epsilon_FirstLast_Row_Col}(i), $\mathcal{L}$ maps the $(k,1)$ element of $J(c)$ to the first column; let us say $\mathcal{L}$ maps it to the $(s,1)$ position of $Y(c)$, where $1<s<m$. If $\mathcal{L}$ does not map the entire $k$th row of $J(c)$ to the $s$th row of $Y(c)$, then there exists $1<j\leq n$ such that the $(s,j)$ position of the matrix $Y(c)$ is not occupied by the image of any element from the $k$th row of $J(c)$. Using Proposition~\ref{SR_epsilon_FirstLast_Row_Col}(i) and an appropriate choice of $c>0$ gives $\epsilon_2\det\begin{pmatrix}
		cy_{s1} & y_{sj} \\
		y_{m1} & y_{mj}
	\end{pmatrix}<0$, a contradiction.
	Similarly, we can prove this for columns. 
\end{proof}
Using the above propositions, we now prove Theorem~\ref{F} for $m=n\geq2$.

\begin{proof}[Proof of Theorem~\ref{F}~(2)$\implies$(3) for $m=n$] 
	Let $n\geq2$ and $\mathcal{L}:\mathbb{R}^{n\times n}\to\mathbb{R}^{n\times n}$ be a linear transformation such that $\mathcal{L}$ maps $\mathcal{SR}_2(\epsilon)$ onto itself. By Proposition~\ref{SR_epsilon_S11Snn_S1nSn1_combo}, we can assume without loss of generality, that
	\begin{equation}\label{SR_epsilon_square_S11SnnSn1S1n_wlog}
		S_{11}=\{E_{11}\}, ~ S_{nn}=\{E_{nn}\}, ~ S_{1n}=\{E_{1n}\}, ~ \text{and} ~ S_{n1}=\{E_{n1}\}
	\end{equation}
	since $\mathcal{L}\in P(\mathcal{SR}(\epsilon))$ if and only if $\mathcal{L}$ composed with the maps $A\mapsto P_nAP_n$ and $A\mapsto A^T$ is a linear SR$(\epsilon)$-preserver. Hence, Proposition~\ref{SR_epsilon_FirstLast_Row_Col} holds.
	
	We now complete the proof by induction on $n$ with the
	base case $n=2$. Let $\mathcal{L}:\mathbb{R}^{2\times2}\to\mathbb{R}^{2\times2}$ be a linear map such that $\mathcal{L}(\mathcal{SR}_2(\epsilon))=\mathcal{SR}_2(\epsilon)$ and let $\mathfrak{B}=(E_{11}, E_{22}, E_{12}, E_{21})$ be an ordered basis of $\mathbb{R}^{2\times 2}$. By Propositions~\ref{SR_epsilon_monomial}~and~\ref{SR_epsilon_FirstLast_Row_Col}, we have $S_{ij}=\{E_{ij}\}\ \forall~1\leq i,j\leq 2$. Thus, $\mathcal{L}(E_{11})=l_{1}E_{11},\ldots,\mathcal{L}(E_{21})=l_{4}E_{21}\in
	\mathcal{SR}_2(\epsilon)$ with $\epsilon_1=1$. By Proposition~\ref{SR_epsilon_monomial}, $l_{i}>0\ \forall~1\leq i\leq 4$. 

	Again, since $J=J_{2 \times2}
	\in \mathcal{SR}_2(\epsilon) ~\text{with}~ \epsilon_1=1$,
	we have 
	\[\mathcal{L}(J)=\begin{pmatrix}
		l_{1} & l_{3} \\
		l_{4} & l_{2}
	\end{pmatrix}\in \mathcal{SR}_2(\epsilon)~\text{and}~ \mathcal{L}^{-1}(J)=\begin{pmatrix}
		1/l_1 & 1/l_3 \\
		1/l_4 & 1/l_2
	\end{pmatrix}\in \mathcal{SR}_2(\epsilon) \]
	and hence $l_{1}l_{2} = l_{3}l_{4}$. Let $B=\begin{pmatrix}
		b_{11} & b_{12} \\
		b_{21} & b_{22} 
	\end{pmatrix}\in \mathcal{SR}_2(\epsilon)$. Then
	\[ \mathcal{L}(B)=\begin{pmatrix}
		l_{1}b_{11} & l_{3}b_{12} \\
		l_{4}b_{21} & l_{2}b_{22}
	\end{pmatrix} = \begin{pmatrix}
		l_{1} & 0  \\
		0 & l_{4}
	\end{pmatrix}\begin{pmatrix}
		b_{11} & b_{12} \\
		b_{21} & b_{22} 
	\end{pmatrix}\begin{pmatrix}
		1 & 0  \\
		0 & l_{3}/l_{1}
	\end{pmatrix},\]
	which is a positive diagonal equivalence. This completes the proof for $n=2$.
	
	For the induction step, let $\mathcal{L}:\mathbb{R}^{n\times n} \to\mathbb{R}^{n\times n}$ be a linear $\mathcal{SR}_2(\epsilon)$-preserver with $n>2$. Let $A\in\mathcal{SR}_2(\epsilon)$. By Proposition~\ref{SR_epsilon_FirstLast_Row_Col}(i), the leading principal submatrix of $A$ of size $(n-1)\times (n-1)$ must be transformed to the leading principal submatrix of size $(n-1)\times (n-1)$ of $\mathcal{L}(A)$. Since every SR$_2(\epsilon)$ matrix $\widehat{A}$ of size
	$(n-1)\times(n-1)$ is a leading principal submatrix of the SR$_2(\epsilon)$ matrix $\widehat{A}\oplus(0)_{1\times1}$ of size $n\times n$, the natural restriction of $\mathcal{L}$ onto the $(n-1)\times(n-1)$ leading
	principal submatrix is a linear $\mathcal{SR}_2(\epsilon)$-preserver on
	$\mathbb{R}^{(n-1)\times(n-1)}$. By the induction hypothesis, it is a composition of one or more of the following maps: (i)~$X\mapsto P_{n-1}XP_{n-1}$, (ii)~$X\mapsto X^T$, and (iii)~$X\mapsto FXE$. In fact, it is a positive diagonal equivalence since the first row is transformed to the first row. Thus $S_{ij}=\{E_{ij}\}\ \forall~1\leq i,j\leq
	n-1.$ By Proposition~\ref{SR_epsilon_FirstLast_Row_Col}, $\mathcal{L}$ maps each row and column of its arguments entirely to some row and column, respectively, and so
	\begin{equation}\label{SR_epsilon_square_Sij=Eij}
		S_{ij}=\{E_{ij}\}~\text{for all} ~ 1\leq i,j\leq n.
	\end{equation}
	Since we may compose the inverse positive diagonal equivalence relative to the upper left principal submatrix of size $(n-1)\times(n-1)$ with $\mathcal{L}$, we assume without loss of generality that $A$ and $\mathcal{L}(A)$ have the same leading principal $(n-1) \times (n-1)$ submatrix. Using~\eqref{SR_epsilon_square_Sij=Eij}, we have
	\[\mathcal{L}(E_{in})=c_iE_{in}, ~ \mathcal{L}(E_{ni})=k_iE_{ni}, ~\text{for}~ 1\leq i\leq n-1, ~\text{and}~
	\mathcal{L}(E_{nn})=dE_{nn}\] 
	for some positive scalars $c_i,k_i$, and $d$. We next claim that
	\begin{equation}\label{eqthrmE3}
		c_1=\cdots=c_{n-1}, ~k_1=\cdots=k_{n-1},~\text{and}~d=c_1k_1.
	\end{equation}
	Let $J=J_{n\times n}$. Then
	\[\mathcal{L}(J)=\begin{pmatrix}
		1 & \cdots & 1 & c_1 \\
		\vdots & \ddots & \vdots & \vdots \\
		1 & \cdots & 1 & c_{n-1} \\
		k_1 & \cdots & k_{n-1} & d
	\end{pmatrix}\in \mathcal{SR}_2(\epsilon) ~\text{and}~\mathcal{L}^{-1}(J)=\begin{pmatrix}
		1 & \cdots & 1 & 1/c_1 \\
		\vdots & \ddots & \vdots & \vdots \\
		1 & \cdots & 1 & 1/c_{n-1} \\
		1/k_1 & \cdots & 1/k_{n-1} & 1/d
	\end{pmatrix}\in \mathcal{SR}_2(\epsilon).\]
	Thus,~\eqref{eqthrmE3} holds. Hence, $\mathcal{L}$ maps $A$ to $FAE$ for some positive diagonal matrices $F$ and $E$. This concludes the induction step and the proof.
\end{proof}

Next, we prove Theorem~\ref{F} for $m>n\geq2$, using the propositions at the beginning of this section.
\begin{proof}[Proof of Theorem~\ref{F}~(2)$\implies(3)$ for $m\neq n$] 
	Let $\mathcal{L}:\mathbb{R}^{m\times n}\to\mathbb{R}^{m\times n}$ where $m>n\geq2$ be a linear transformation such that $\mathcal{L}$ maps $\mathcal{SR}_2(\epsilon)$ onto itself. Since $\mathcal{L}$ is an $\mathcal{SR}_2(\epsilon)$-preserver if and only if $\mathcal{L}$ composed with the map $A\mapsto P_{m}AP_{n}$ is also one, by Proposition~\ref{SR_epsilon_S11Snn_S1nSn1_combo}(i), we can assume without loss of generality that 
	\begin{equation}\label{SR_epsilon_rectangle_S11Smn_wlog}
		S_{11}=\{E_{11}\}~ \text{and} ~ S_{mn}=\{E_{mn}\}.
	\end{equation} 
	Therefore, from~\eqref{SR_epsilon_rectangle_S11Smn_wlog} and Proposition~\ref{SR_epsilon_S11Snn_S1nSn1_combo}(ii), we have either
	\begin{align*}
		&S_{11}=\{E_{11}\}, ~ S_{mn}=\{E_{mn}\}, ~ S_{1n}=\{E_{1n}\}, ~ \text{and} ~S_{m1}=\{E_{m1}\}, ~ \text{or} \\
		&S_{11}=\{E_{11}\}, ~ S_{mn}=\{E_{mn}\}, ~ S_{1n}=\{E_{m1}\}, ~ \text{and} ~ S_{m1}=\{E_{1n}\}.
	\end{align*}
	Next, we show that the conditions $S_{11}=\{E_{11}\}$, $S_{mn}=\{E_{mn}\}$, $S_{1n}=\{E_{m1}\}$, $S_{m1}=\{E_{1n}\}$ do not simultaneously hold. Assume to the contrary that they do. Let $J(c)$ be the matrix obtained by multiplying the first row of $J=J_{m\times n}$ by $c>0$. Then $J(c)\in \mathcal{SR}_2(\epsilon)$ and hence $\mathcal{L}(J(c)):=Y(c)\in \mathcal{SR}_2(\epsilon)$ with $\epsilon_1=1$. Since $m>n$, even if $\mathcal{L}$ maps all elements of the first row of $J(c)$ to the first column of $Y(c)$, there exists $1<j<m$ such that the $(j,1)$ position of $Y(c)$ is not occupied by the image of any element from the first row of $J(c)$. Therefore, by our assumption and an appropriate choice of $c>0$, we obtain $\epsilon_2\det\begin{pmatrix}
		cy_{11} & y_{1n} \\
		y_{j1} & \alpha y_{jn}
	\end{pmatrix}<0,$ where either $\alpha=c$ or  $\alpha=1$, a contradiction. Thus
	\begin{equation} \label{SR_epsilon_rectangle_S11SmnS1nSm1_wlog}
		S_{11}=\{E_{11}\},~ S_{mn}=\{E_{mn}\},~ S_{1n}=\{E_{1n}\}, ~\text{and}~ S_{m1}=\{E_{m1}\},
	\end{equation}
	and Proposition~\ref{SR_epsilon_FirstLast_Row_Col} holds. We now complete the proof by induction on the size of the matrices, with the base case $m=3$ and $n=2$. Let $\mathcal{L}:\mathbb{R}^{3\times2}\to\mathbb{R}^{3\times2}$ such that $\mathcal{L}(\mathcal{SR}_2(\epsilon))=\mathcal{SR}_2(\epsilon)$. 
	
	Let $\mathfrak{B}=(E_{11}, E_{22}, E_{12}, E_{21},E_{31},E_{32})$ be an ordered basis of $\mathbb{R}^{3\times 2}$. By Proposition~\ref{SR_epsilon_FirstLast_Row_Col}, we have $S_{ij}=\{E_{ij}\}$ for all $1\leq i\leq 3$,~ $1\leq j\leq2$. Thus, $\mathcal{L}(E_{11})=l_{1}E_{11},\ldots,\mathcal{L}(E_{32})=l_{6}E_{32}\in \mathcal{SR}_2(\epsilon)$ with $\epsilon_1=1$. By Proposition~\ref{SR_epsilon_monomial}, $l_{i}>0$ for all $1\leq i\leq 6$.
	
	Since $A_1:=\begin{pmatrix}
		1 & 1 \\
		1 & 1 \\
		0 & 0
	\end{pmatrix}, \ A_2:=\begin{pmatrix}
		1 & 1 \\
		0 & 0 \\
		1 & 1
	\end{pmatrix} \in \mathcal{SR}_2(\epsilon),$ hence the four matrices
	\[
	\mathcal{L}(A_1) = \begin{pmatrix}
		l_{1} & l_{3} \\
		l_{4} & l_{2} \\
		0 & 0
	\end{pmatrix}, \quad \mathcal{L}^{-1}(A_1)=\begin{pmatrix}
		1/l_1 & 1/l_3 \\
		1/l_4 & 1/l_2 \\
		0 & 0
	\end{pmatrix}, \quad
	\mathcal{L}(A_2) = \begin{pmatrix}
		l_{1} & l_{3} \\
		0 & 0 \\
		l_{5} & l_{6}
	\end{pmatrix}, \quad \mathcal{L}^{-1}(A_2)=\begin{pmatrix}
		1/l_1 & 1/l_3 \\
		0 & 0   \\
		1/l_5 & 1/l_6
	\end{pmatrix}
	\]
	are all in $\mathcal{SR}_2(\epsilon)$. It follows that $l_{1}l_{2}=l_{3}l_{4}$ and $l_{1}l_{6}=l_{3}l_{5}$. Let $B=\begin{pmatrix}
			b_{11} & b_{12} \\
			b_{21} & b_{22} \\
			b_{31} & b_{32}
		\end{pmatrix}\in \mathcal{SR}_2(\epsilon)$. Then
	\[\mathcal{L}(B)=\begin{pmatrix}
		l_{1}b_{11} & l_{3}b_{12} \\
		l_{4}b_{21} & l_{2}b_{22} \\
		l_{5}b_{31} & l_{6}b_{32}
	\end{pmatrix} = \begin{pmatrix}
		l_{1} & 0 & 0 \\
		0 & l_{4} & 0 \\
		0 & 0 & l_{5}
	\end{pmatrix}\begin{pmatrix}
		b_{11} & b_{12} \\
		b_{21} & b_{22} \\
		b_{31} & b_{32}
	\end{pmatrix}\begin{pmatrix}
		1 & 0  \\
		0 & l_{3}/l_{1}
	\end{pmatrix},\] 
	which is a positive diagonal equivalence. This completes the proof for the base case.
	
	For the induction step, let $\mathcal{L}:\mathbb{R}^{m\times2}\to\mathbb{R}^{m\times2}$ be a linear map such that $\mathcal{L}\in P(\mathcal{SR}_2(\epsilon))$. Let $A\in\mathcal{SR}_2(\epsilon)$ of size $m\times2$. By Proposition~\ref{SR_epsilon_FirstLast_Row_Col}(i), the submatrix of $A$ formed by the first $(m-1)$ rows and both columns must be transformed to the first $(m-1)$ rows of $\mathcal{L}(A)$. Since every SR$_2(\epsilon)$ matrix $\widehat{A}$ of size $(m-1)\times 2$ is a submatrix of the SR$_2(\epsilon)$ matrix $(\widehat{A}^T|\mathbf{0})^T\in\mathbb{R}^{m\times 2}$, the natural restriction of $\mathcal{L}$ onto the $(m-1)\times 2$ top submatrix  is a linear $\mathcal{SR}_2(\epsilon)$-preserver on $\mathbb{R}^{(m-1)\times 2}$. By the induction hypothesis, the restriction of $\mathcal{L}$ is a composition of one or more of the following maps: (i)~$X\mapsto P_{m-1}XP_2$, and (ii)~$X\mapsto FXE$. In fact, it is a positive diagonal equivalence since the first row and column is transformed to the first row and column, respectively. Thus, $S_{ij}=\{E_{ij}\}$ for all $1\leq i\leq m-1$ and $1\leq j\leq 2$. By Proposition~\ref{SR_epsilon_FirstLast_Row_Col}(i), $\mathcal{L}$ maps the first (and last) column of its arguments entirely to the first (and last) column, so
	\begin{equation}\label{SR_epsilon_rectangle_Sij=Eij}
		S_{ij}=\{E_{ij}\} ~\text{for all}~ 1\leq i\leq m,~  1\leq j\leq 2.
	\end{equation} 
	Since we may compose the inverse positive diagonal equivalence relative to the upper submatrix of size $(m-1)\times 2$ with $\mathcal{L}$, we may assume without loss of generality that all but the last row of $A$ and $\mathcal{L}(A)$ are equal. Using~\eqref{SR_epsilon_rectangle_Sij=Eij}, we have $\mathcal{L}(E_{mi})=k_iE_{mi}$ for $1\leq i\leq 2$, for some positive scalars $k_1$ and $k_2$. We next claim that $k_1=k_{2}.$ Let $J=J_{m\times2}$. Then
	\[\mathcal{L}(J)=\begin{pmatrix}
		J_{(m-1) \times 2}\\ k_1 \qquad k_2
	\end{pmatrix} \in \mathcal{SR}_2(\epsilon)
	~\text{and} ~
	\mathcal{L}^{-1}(J)= \begin{pmatrix}
		J_{(m-1) \times 2}\\ 1/k_1 \qquad
		1/k_2
	\end{pmatrix}\in \mathcal{SR}_2(\epsilon). \]
	Therefore, we have $k_1=k_{2}$ and hence $\mathcal{L}$ is a positive diagonal equivalence. This completes the induction step for $m$ and the result holds for all linear $\mathcal{SR}_2(\epsilon)$-preservers from $\mathbb{R}^{m\times2}$ to $\mathbb{R}^{m\times2}$. 
	
	Now, fix arbitrary $m$ and suppose the claim holds for all linear $\mathcal{SR}_2(\epsilon)$-preservers from $\mathbb{R}^{m\times(n-1)}\to\mathbb{R}^{m\times(n-1)}$. Let $\mathcal{L}:\mathbb{R}^{m\times n}\to\mathbb{R}^{m\times n}$ be a linear $\mathcal{SR}_2(\epsilon)$-preserver. For $A\in\mathcal{SR}_2(\epsilon)$ of size $m\times n$, the submatrix of $A$ formed by all rows and the first $(n-1)$ columns must be transformed to the first $(n-1)$ columns of $\mathcal{L}(A)$ by Proposition~\ref{SR_epsilon_FirstLast_Row_Col}(i). Since every SR$_2(\epsilon)$ matrix $\widehat{A}$ of size $m\times(n-1)$ is a submatrix of the SR$_2(\epsilon)$ matrix $(\widehat{A}|\mathbf{0})\in\mathbb{R}^{m\times n}$, the natural restriction of $\mathcal{L}$ to the $m\times(n-1)$ left submatrix is a linear $\mathcal{SR}_2(\epsilon)$-preserver on $\mathbb{R}^{m\times(n-1)}$. By the induction hypothesis, it is a composition of one or more of the maps:
	(i)~$X\mapsto P_mXP_{n-1}$, and (ii)~$X\mapsto FXE$. By the same argument as in the preceding part,
	\[S_{ij}=\{E_{ij}\} ~\text{for all}~ 1\leq i\leq m,~ 1\leq j\leq n-1.\] 
	By Proposition~\ref{SR_epsilon_FirstLast_Row_Col}, $\mathcal{L}$ maps each row of its arguments entirely to some row and hence
	\begin{equation}\label{SR_epsilon_rectangle_second_Sij=Eij}
		S_{ij}=\{E_{ij}\} ~\text{for all}~ 1\leq i\leq m,~ 1\leq j\leq n.
	\end{equation} 
	Since we may compose the inverse positive diagonal equivalence relative to the left submatrix of size $m\times(n-1)$ with $\mathcal{L}$, we may assume without loss of generality that all but the last column of $A$ and $\mathcal{L}(A)$ are equal. Using~\eqref{SR_epsilon_rectangle_second_Sij=Eij}, we have
	\[\mathcal{L}(E_{in})=c_iE_{in}~\text{for}~ 1\leq i\leq m, ~\text{for some positive scalar} ~c_i.\]
	Now, to complete the proof, we must show that $c_1=\cdots=c_{m}.$ Let $J=J_{m\times n}$. Then
	\[\mathcal{L}(J)=\begin{pmatrix}
		1 & \cdots & 1 & c_1 \\
		1 & \cdots & 1 & c_2 \\
		\vdots & \ddots & \vdots & \vdots \\
		1 &  \cdots & 1 & c_{m} 
	\end{pmatrix}\in \mathcal{SR}_2(\epsilon) ~\text{and}~ \mathcal{L}^{-1}(J)=\begin{pmatrix}
		1 & \cdots & 1 & 1/c_1 \\
		1 & \cdots & 1 & 1/c_2 \\
		\vdots & \ddots & \vdots & \vdots \\
		1 & \cdots & 1 & 1/c_m 
	\end{pmatrix}\in \mathcal{SR}_2(\epsilon). \]
	Therefore, we have $c_1=\cdots=c_{m}$. Thus, $\mathcal{L}$ maps $A$ to $FAE$ for some positive diagonal matrices $F$ and $E$. This concludes the induction step.
\end{proof}

\begin{proof}[Proof of Theorem~\ref{F}~(1)$\implies(3)$] 
	We show (1)$\implies$(3) simultaneously for both cases: $m=n$ and $m\neq n$. Note that in the proof of (2)$\implies$(3) above, all test matrices $A$ considered were SR$(\epsilon)$, and the argument involved only the $1\times1$ and $2\times2$ minors of the output matrices $\mathcal{L}(A)$. Therefore, assuming (1) instead of (2), the same steps listed above give the preservers listed in (3). This concludes the proof.
\end{proof}

\begin{rem}
	Theorem~\ref{F} holds also for the class of SSR$(\epsilon)$ matrices. The proof follows Remark~\ref{SSR_proof} verbatim.
\end{rem}	
	
	\section*{Acknowledgments}
	We thank Apoorva Khare and Gadadhar Misra for a detailed reading of an earlier draft and for providing valuable feedback. We are also grateful to the anonymous referee for carefully going through the manuscript and offering several constructive comments that helped improve the exposition. The first author was partially supported by ANRF Prime Minister Early Career Research Grant ANRF/ECRG/2024/002674/PMS (ANRF, Govt.~of India), INSPIRE Faculty Fellowship Research Grant DST/INSPIRE/04/2021/002620 (DST, Govt.~of India), and IIT Gandhinagar Internal Project: IP/IP/50025.

\end{document}